\newtheorem{theorem}{Theorem}
\newtheorem{assumption}{Assumption}
\newtheorem{lemma}{Lemma}
\newtheorem{definition}{Definition}
\newtheorem{remark}{Remark}
\newcommand{\ubar}[1]{\underaccent{\bar}{#1}}
\DeclareMathOperator{\diag}{diag}
\DeclareMathOperator{\rank}{rank}
\DeclareMathOperator{\col}{col}
\newcommand{\R}{\mathbb{R}}
\newcommand{\an}{{\alpha}}
\newcommand{\ab}{{\bar\alpha}}
\newcommand{\opt}[1]{\mathrm{MIN}(#1)}
\DeclareAcronym{mip}{
	short = MIP ,
	long  = Mixed-Integer Programming ,
	sort  = M ,
}
\DeclareAcronym{der}{
	short = DER ,
	long  = Distributed Energy Resources ,
	sort  = D ,
}
\DeclareAcronym{rtm}{
	short = RTM ,
	long  = Real-Time Market ,
	sort  = R ,
}
\DeclareAcronym{ads}{
	short = ADS ,
	long  = Active Demand and Supply ,
	sort  = A ,
}
\DeclareAcronym{brp}{
	short = BRP ,
	long  = Balance Responsible Party ,
	sort  = B ,
}
\DeclareAcronym{tso}{
	short = TSO ,
	long  = Transmission System Operator ,
	sort  = T ,
}
\DeclareAcronym{hp}{
	short = HP ,
	long  = Heat Pump ,
	sort  = H ,
}
\DeclareAcronym{mchp}{
	short = mCHP ,
	long  = Micro Combined Heat and Power  ,
	sort  = m ,
}
\DeclareAcronym{mpec}{
	short = MPEC ,
	long  = Mathematical Programming with Equilibrium Constraints  ,
	sort  = M ,
}
\DeclareAcronym{res}{
	short = RESs ,
	long  = Renewable Energy  Sources,
	sort  = R,
}
\DeclareAcronym{ict}{
	short = ICT ,
	long  = Information and Communications Technology,
	sort  = I,
}
\title{Bilevel Aggregator-Prosumers' Optimization Problem in Real-Time: A Convex Optimization Approach}
\author{ \href{https://orcid.org/0000-0003-4493-6495}{\includegraphics[scale=0.06]{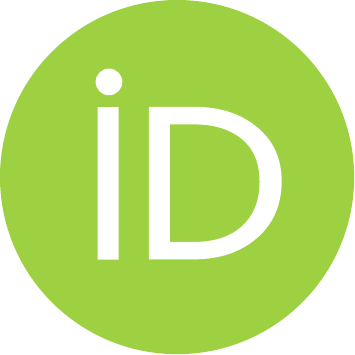}\hspace{1mm}Koorosh Shomalzadeh} \\
	Bernoulli Institute for
	Mathematics, Computer Science and Artificial Intelligence\\
	Faculty of Science and Engineering, University of Groningen\\
	Nijenborgh 9, 9747 AG, Groningen, The Netherlands \\
	\texttt{k.shomalzadeh@rug.nl} \\
	\And
	\href{https://orcid.org/0000-0002-3409-5760}{\includegraphics[scale=0.06]{orcid.pdf}\hspace{1mm}Jacquelien M.~A.~Scherpen} \\
	Engineering and Technology Institute Groningen\\
	Faculty of Science and Engineering, University of Groningen\\
	Nijenborgh 4, 9747 AG, Groningen, The Netherlands \\
	\texttt{j.m.a.scherpen@rug.nl} \\
	\AND
	\href{https://orcid.org/0000-0002-2407-8166}{\includegraphics[scale=0.06]{orcid.pdf}\hspace{1mm}M.~Kanat Camlibel} \\
	Bernoulli Institute for
	Mathematics, Computer Science and Artificial Intelligence\\
	Faculty of Science and Engineering, University of Groningen\\
	Nijenborgh 9, 9747 AG, Groningen, The Netherlands \\
	\texttt{m.k.camlibel@rug.nl} \\
}
\date{}
\begin{document}
\maketitle

\begin{abstract}
This paper proposes a \ac*{rtm} platform for an aggregator and its corresponding prosumers to participate in the electricity wholesale market. The proposed energy market platform is modeled as a bilevel optimization problem where the aggregator and the prosumers are considered as self-interest agents. The current state-of-the-art \ac*{mpec} and \ac*{mip} based approaches to solve bilevel optimization problems are not satisfactory for real-time applications. The computation time for such approaches grows exponentially as the number of prosumers and decision variables increase. This paper presents a convex optimization problem which can capture a subset of the set of global optima of the bilevel problem as its optimal solution. 
\end{abstract}

\keywords{Bilevel optimization \and Convex optimization \and Real-time electricity market \and Computational efficiency}

	\section{Introduction}
Power systems are experiencing a fundamental transition. Previously, the energy was generated in the bulk power plants and it was flowing through transmission and distribution networks to the consumers. The massive installation of \ac{res} at the household level has challenged this structure. Therefore, new schemes and models are needed to efficiently cope with this transition \citep{bollen2011integration}. 
\par
The emergence of the energy producing consumers, i.e., prosumers 
and recent \ac{ict} developments in the paradigm of smart grid \citep{gungor2011smart} have opened up new horizons for  less grid-dependent households.    
Since output generation of  \ac{res} are volatile due to their intrinsic environmental dependency, researchers have proposed different approaches to address demand and supply matching for a group of prosumers.
Utilization of storage devices \citep{roberts2011role}, bilateral energy transactions between prosumers \citep{bedoya2019bilateral}, and  bilateral energy transaction between prosumers and the wholesale market \citep{zugno2013bilevel} are among the most prominent of those approaches. 
\par
Here in this paper we focus on a real-time grid-prosumers energy transaction through an aggregator as the mechanism to address demand and supply  matching. 
The aggregator's role is to gather and manage  a group of prosumers in order to participate in the real-time wholesale market.
\par 
Many types of aggregator with dissimilar goals have been studied in different financial and market structures in the area of electricity markets \citep{martin2016literature}. In this work, the aggregator is a self-interest market participant who has the goal of participating in the  real-time wholesale market in order to maximize its revenue.  To do so, the aggregator considers each individual prosumer demand and supply situation and proposes a personalized price  to buy its excess supply or provide the prosumer its energy deficiency at each time-step in a \ac{rtm}.
\par
On the other hand, each prosumer receives a price from the aggregator and responds optimally  by considering its demand preferences  and supply situations over a horizon. We  assume that the aggregator can anticipate the reaction of the prosumers. This price  oriented setup falls into the category of {bilevel optimization problems} \citep{colson2007overview}    and {Stackelberg games} \citep{von2010market}, where the lower level problem and the upper level problem  are the problems related to the prosumers and the aggregator, respectively.
\par
Bilevel optimization problems have extensively used to model and solve energy systems problems \citep{dempe2015bilevel}.
The initial work 
\citep{hobbs2000strategic} models  strategic offering of a dominant generating firm as a bilevel optimization problem, where at the upper level a generator firm maximizes its profit and at the lower level a system operator maximizes social welfare or minimizes total system cost. This problem is rewritten as a \ac{mpec} and solved by  a penalty interior point algorithm. More recent works (e.g., \citealt{zugno2013bilevel}) focus on the aggregator and prosumers problem. The state-of-the-art approach to solve these types of problems is to reformulate the bilevel optimization problem as a \ac{mip}. 
\par 
Both the \ac{mpec} and \ac{mip} based methods are computationally expensive. One of the main challenges to implement an \ac{rtm} is the computational efficiency. For an \ac{rtm} the time intervals are in the order of a few minutes  \citep{vlachos2013demand}.  Therefore, new computational tools are needed for the aggregator's real-time control over the prosumers and its participation  in the \ac{rtm}.     
\citealt{ghamkhari2016strategic} has addressed the computational efficiency of the dominant firm's strategic offering   by introducing  a convex relaxation for the bilevel optimization problem and has found a close to optimal solution. However, to the best of our knowledge, no study has been done on finding the global optimum of a bilevel optimization problem by solving a convex one in the field of prosumers integration in the  wholesale energy markets.
\par 
In this paper, we define the problem of economic optimization of an aggregator and its corresponding prosumers for participation in an \ac{rtm} over a time horizon as a bilevel optimization problem. The aggregator  represents the prosumers to participate in the wholesale market in a real-time scenario.  This problem, in general, is nonconvex \citep{luo1996mathematical}. We show that a subset of the set of global minimizers for the nonconvex  problem can be obtained as the solution of a certain convex optimization problem.  
The convex problem has two main advantages. On the one hand, a  convex formulation is attractive in real-time applications since the computation time is linear in the number of variables. On the other hand, off-the-shelf software packages can be used to solve the problem.
{In addition, replacing a bilevel optimization problem by a convex one is a key step toward  decentralized or distributed algorithms  \citep{Bertsekas/99}.} This work is a
continuation of the preliminary study by the authors \citep{2004.08612} which dealt with a simple static model for balancing markets.
\par 
The paper is organized as follows. In Section~\ref{sec:pf}, we define the aggregator and prosumers problems as a bilevel optimization problem. The results toward introducing  a convex optimization problem for the bilevel one comes in Section~\ref{sec:mr}.  Finally, the paper closes with the conclusions in Section~\ref{sec:con}.
\subsection*{Notation}
We denote the set of real numbers by $\R$, $n$-vectors by $\R^n$ and $m\times n$ matrices by $\R^{m\times n}$. Throughout the paper, the inequalities for vectors are meant entrywise. The $n$-vectors of ones is denoted by $\mathds{1}_n$. For vectors $x_i \in \R^{n_i}$ with $i=1,2,\dots,k$,  we write 
$\col (x_1, x_2, \dots, x_k) $
to denote the vector 
$\begin{bmatrix}
	x_1^T & x_2^T & \cdots& x_k^T
\end{bmatrix}^T$. The $m\times m$ identity matrix is denoted by $I_m$. For a matrix $M\in \R^{m \times n}$ and index sets $\alpha \subseteq \{1,2,\dots,m\}$, $\beta \subseteq \{1,2,\dots,n\}$, the notation $M_{\alpha\beta}$ denotes the matrix $ {\Big(
	M_{ij}
	\Big)}_{i \in \alpha, j \in \beta}$. If $\alpha=\{1,2,\dots,m\}$, then we write $M_{\bullet \beta}$ and if $\beta=\{1,2,\dots,n\}$, then we write $M_{\alpha \bullet}$. A symmetric matrix $M=M^T \in \R^{m\times m}$ is said to be positive semidefinite if  $x^TMx \ge 0$ for all $x \in \R^m$ and positive definite if $x^TMx >0$ for all $0 \neq x \in \R^m$. 
The symmetric square root of a positive definite matrix $M$ is denoted by $M^{\frac{1}{2}}$.
For a vector $v \in \R^n$, we write $\diag(v)$ for the diagonal matrix with diagonal entries $v_1, v_2, \dots, v_n$.
Let $f:\R^n\rightarrow\R$ and $S\subseteq\R^n$. Consider the optimization problem
\begin{subequations}\label{OP}
	\begin{alignat}{2}\mathrm{OP:}\qquad
		&\min_{x} \quad && f(x)\\
		& \mathrm{subject \ to} \quad && x \in S.
	\end{alignat}
\end{subequations}
We say that $\bar{x}$ is feasible for OP if $\bar{x} \in S$.
Also, we define the set of global optima for OP as
\begin{equation}
	\opt{\mathrm{OP}}=\{x^*\in S \mid f(x^*)\le f(x) \ \forall x \in S\}.
\end{equation}
\section{Problem Statement}\label{sec:pf}
In this section, we define a market model and platform for an aggregator and the  prosumers under its contract to participate in an \ac{rtm} with the grid, i.e., the wholesale market. Here, the role of the aggregator is to act as an intermediary agent between the prosumers and the grid to facilitate  the energy transactions. We consider the case where  each prosumer can generate energy through some \ac{res} with zero cost. Example of such energy sources are solar panels and wind turbines. Moreover, each prosumer's demand is elastic at each time-step. 
The aggregator goal  is to propose the prosumers with a personalized price to deal their surplus or shortage energy with the grid in an optimal way. 
The advantages of a personalized price over a unique price have been addressed in many recent research (see e.g., \citealt{tushar2014prioritizing}, \citealt{yang2018model}).
Next, we explain the problem setting and market structure in detail.
\subsection{Prosumer's Problem}
The main source of energy supply for a prosumer is its renewable energy units. Due to uncertain and uncontrollable nature  of \ac{res}, there might be a mismatch between supply and demand   at each time-step. Each prosumer has two options to cancel this mismatch. One is to trade  with the wholesale market through the aggregator. The other option is to use its demand elasticity. Therefore, the prosumer needs to find a trade-off between these two possible options for its optimal strategy. Before providing  a mathematical formulation   for the prosumer, we elaborate on demand elasticity.
\par
We say that the demand of each prosumer is elastic if:
\begin{enumerate}
	\item Each prosumer has a preference for its demand at each time-step.
	\item Altering the demand from its preferred value causes dissatisfaction for the prosumer. Here,  we model this dissatisfaction using a quadratic function.
	\item Each prosumer has a lower bound and an upper bound for its demand at each time-step.
	\item Total demand of each prosumer in a specific time period is constant.
\end{enumerate}
As explained before, the prosumer goal is to find a trade-off between two possible options to minimize its cost and maximize its comfort.
We model this problem as an optimization problem.  We define the set of prosumers by $\{1, 2, \dots, n\}$ and the set of time-steps by $\{1, 2, \dots, K\}$. 
Then, prosumer $i \in \{1, 2, \dots,n\}$  at time-step $k \in \{1, 2, \dots, K\}$ has three decision variables:
its demand $h_i(k)$, the energy it sells to (buy form) the grid $y_i^+(k)$ ($y_i^-(k)$). For the $i$th prosumer, we consider the following optimization problem: 
\begin{subequations}\label{prop}
	\begin{alignat}{2}\mathrm{PP}_i:\qquad
		&\min_{\substack{h_i(k),y_i^+(k),y_i^-(k) \\ \forall \ k \in \{1,2,\dots,K \} }} \quad && \sum_{k=1}^{K} \frac{1}{2}q_i(k)(h_i(k)-h_i^0(k))^2+x_i^-(k)y_i^-(k)-x_i^+(k)y_i^+(k) \label{propof}\\
		&\mathrm{subject \ to} \quad && y_i^+(k)-y_i^-(k)+h_i(k)=s_i(k) \quad \forall \ k \in \{1, \dots ,K\} \label{propeq} \\
		&&& y_i^+(k),y_i^-(k) \ge 0 \quad  \forall \ k \in \{1, \dots ,K\} \label{propnn} \\
		&&&\ubar h_i(k) \le h_i(k) \le \bar h_i(k) \quad \forall \ k \in \{1, \dots ,K\} \label{proplub} \\
		&&& \sum_{k=1}^K h_i(k) =h_i^{\mathrm{tot}} \label{proptot}
	\end{alignat}
\end{subequations}
where $x_i^+(k)$ ($x_i^-(k)$) is the proposed price by the aggregator to buy energy from (sell energy to) the prosumer at time-step $k$, $s_i(k) \ge 0$ is the generated energy by the prosumer at time-step $k$,  which assumed to be known, and $q_i(k) >0$ is the dissatisfaction parameter for the prosumer. Moreover, $h_i^0(k) \ge 0$, $\ubar h_i(k) \ge 0$ and $ \bar h_i(k) \ge 0$ are the preferred value, lower bound and upper bound for the demand $h_i(k)$, respectively. The parameter $h_i^{\mathrm{tot}}$ is the total demand for the prosumer over the period $k=1$ to $k=K$.
\par 
In \eqref{propof}, the first term models the dissatisfaction the prosumer experiences by changing its demand from the preferred value. The second term is the cost of buying energy from the grid through the aggregator and the third term is the revenue the prosumer can obtain by selling energy through the aggregator. The constraint \eqref{propeq} indicates that the total demand should be equal to  the total supply for each prosumer at each time-step. The constraints \eqref{propnn} and \eqref{proplub} specify the lower bound and upper bound for the decision variables.
Finally, \eqref{proptot} captures the assumption that the  total demand over a period is constant.
\begin{assumption}\label{asump1}
	The sum of preferred  values $h_i^0(k)$s over the period $k=1$ to $k=K$ is equal to $h_i^{\mathrm{tot}}$, i.e., 
	\begin{equation}
		\sum_{k=1}^K h_i^0(k)=h_i^{\mathrm{tot}}.
	\end{equation}
\end{assumption}
Form \eqref{propeq}, we can write $h_i(k)$ as
\begin{equation}
	h_i(k)=s_i(k)-(y_i^+(k)-y_i^-(k)).
\end{equation}
Thus, the variable $h_i(k)$ can be eliminated from the problem $\mathrm{PP}_i$ and we can rewrite it as the following optimization problem:
\begin{subequations}\label{propp}
	\begin{alignat}{2}\mathrm{PP}_i^\prime:\qquad
		&\min_{\substack{y_i^+(k),y_i^-(k) \\\forall \ k \in\{1,2,\dots,K\} }} \quad && \sum_{k=1}^{K} \frac{1}{2}q_i(k)(y_i^+(k)-y_i^-(k))^2+c_i(k) (y_i^+(k)-y_i^-(k)) \nonumber\\ &&& \qquad +x_i^-(k)y_i^-(k)-x_i^+(k)y_i^+(k) +\frac{1}{2}q_i(k)(h_i^0(k))^2 \label{proppof} \\
		&\mathrm{subject \ to} \quad && y_i^+(k),y_i^-(k) \ge 0 \quad \forall \ k \in \{1, \dots ,K\} \label{proppnn} \\
		&&&s_i(k) - \bar h_i(k)\le y_i^+( k)-y_i^-(k)\le s_i(k) - \ubar h_i(k) \quad \forall \ k \in \{1, \dots ,K\} \label{propplub} \\
		&&&- \sum_{k=1}^K (y_i^+(k)-y_i^-(k)) = h_i^{\mathrm{tot}}-\sum_{k=1}^K s_i(k) \label{propptot}
	\end{alignat}
\end{subequations}
where $c_i(k)=q_i(k)(h_i^0(k)-s_i(k))$. 
Moreover, since  the optimization problems $\mathrm{PP}_i^\prime$s are independent,  we can add them   and rewrite them in a vector form. To do so, we define the following vectors:
\begin{equation}
	\begin{aligned}
		&q=\col(q_1(1),q_1(2),\dots,q_n(K)),
		&&c=\col(c_1(1),c_1(2),\dots,c_n(K)), \\
		&h^0=\col(h_1^0(1),h_1^0(2),\dots,h_n^0(K)), 
		&&\ubar h=\col(\ubar h_1(1),\ubar h_1(2),\dots, \ubar h_n(K)),\\
		&\bar h=\col(\bar h_1(1),\bar h_1(2),\dots, \bar h_n(K)),
		&&s =\col(s_1(1),s_1(2),\dots,s_n(K)),\\
		&y^+=\col(y_1^+(1),y_1^+(2),\dots, y_n^+(K)),
		&&y^-=\col(y_1^-(1),y_1^-(2),\dots, y_n^-(K)),\\
		&x^+=\col(x_1^+(1),x_1^+(2),\dots, x_n^+(K)),
		&&x^-=\col(x_1^-(1),x_1^-(2),\dots, x_n^-(K)),\\
		&h^{\mathrm{tot}}=\col(h_1^{\mathrm{tot}},h_2^{\mathrm{tot}},\dots ,h_n^{\mathrm{tot}}).
	\end{aligned}
\end{equation}
Then, the vector form can be written as 
\begin{subequations}\label{props}
	\begin{alignat}{2}\mathrm{PP:}\qquad
		&\min_{y^+,y^-} \quad && \frac{1}{2}(y^+ - y^-)^TQ(y^+-y^-)+c^T (y^+-y^-) \nonumber\\ &&&  +({x^-})^Ty^- -({x^+})^Ty^+   \\
		&\mathrm{subject \ to} \quad && y^+,y^- \ge 0  \\
		&&&\ell \le y^+-y^-\le u \\
		&&& E (y^+-y^-) =d
	\end{alignat}
\end{subequations}	
where we have the following parameters:
\begin{gather}\label{parameters1}
	Q=\diag(q),  \quad c=Q(h^0-s), \quad \ell = s - \bar h,\\\label{parameters2}
	u=s - \ubar h, \quad E=-I_n \otimes \mathds{1}_K^T, \quad d= h^{\mathrm{tot}}- Es.
\end{gather}
Note that $\otimes$ denotes the Kronecker product.
\par
The prices $x_i^+(k)$ and $x_i^-(k)$ are proposed by the aggregator. In this work, the aggregator acts as a self-interest agent which has the ability to anticipate the reaction of the prosumers.   Therefore, knowing the reaction of the prosumers, the aggregator sets the prices to maximize its revenue as an intermediary player between the grid and the prosumers.
In the next  subsection, we elaborate on the aggregator's problem as a bilevel optimization problem.
\subsection{Aggregator's  Problem}
The aggregator receives two prices from the grid for each time-step. The price $p^+(k)$ is the price for selling energy to grid and the price $p^-(k)$ is the price for buying energy from the grid at $k$th time-step. Having these prices and the ability of the aggregator to anticipate the reaction of the prosumers  allow   the aggregator to propose prices $x_i^+(k)$ and $x_i^-(k)$ to the prosumers in an optimal way. The bilevel optimization below models this problem for the aggregator.
\begin{subequations}
	\begin{alignat}{2}\mathrm{AP:}\qquad
		&\max_{x^+,x^-,y^+,y^-} \quad &&  (p^+-x^+)^Ty^+-(p^--x^-)^Ty^- \label{apof}\\
		& \mathrm{subject \ to} \quad && x^+,x^- \ge 0 \\
		&&& (y^+,y^-) \in \opt{\mathrm{PP}} \quad 
	\end{alignat}
\end{subequations}
where
$p^+=\mathds{1}_n \otimes \col(p^+(1), p^+(2),\dots,p^+(K))$ and 	$p^-=\mathds{1}_n \otimes \col(p^-(1), p^-(2),\dots,p^-(K))$. The first term in \eqref{apof} corresponds to aggregator's revenue from selling energy to the grid. The second term models the aggregator's cost for buying energy from the grid.
\par 
In this paper, we consider a scenario where $p^+=-p^-=p$ and the aggregator proposes prices $x^+$ and $x^-$ such that $x^+=-x^-=x$. Therefore, we can rewrite the  optimization problems AP and PP based on the new decision variables $x$ and $y=y^+-y^-$ as the minimization problems BLP and LLP, respectively. 
\begin{subequations} \label{blp}
	\begin{alignat}{2} 
		\mathrm{BLP:}\qquad
		&\min_{x,y} && (x-p)^Ty \label{blpof}\\
		& \mathrm{subject \ to} \quad && x \ge 0 \label{blpx}\\
		&&& y=\opt{\mathrm{LLP}}. \label{blpll}
	\end{alignat}
\end{subequations}
Here the decision vector  $x\in \R^{m}$ is the proposed prices of  the aggregator  and the parameter vector  $p \in \R^{m}$ is the prices of selling to and buying from the grid.  The prosumers' reactions  $y$ to  the proposed prices  are the solution of  the optimization problem LLP.
\begin{subequations}\label{llp}
	\begin{alignat}{2}\mathrm{LLP:}\qquad
		&\min_y &&{\frac{1}{2}y^TQy+(c-x)^Ty} \label{llpof}\\
		& \mathrm{subject \ to} \quad && \ell \le y \le u  \label{llpinq}\\
		&&& Ey=d \label{llpeq}
	\end{alignat}
\end{subequations}
The vector $y \in \R^{m}$ is the decision variable for  LLP. The vectors and matrices $c,\ell, u \in \R^{m}$, $d\in \R^n$, $Q \in \R^{m\times m}$,  $E \in \R^{n\times m}$  are parameters for LLP as defined in \eqref{parameters1} and \eqref{parameters2}. Moreover,  $x\in \R^{m}$ is the decision variable for the aggregator and the prosumers  has no control over it. It should be noted that $m=nK$ and   $\rank E =n\le  m$. 
We assume that  there exists $\bar{y}$ which satisfies \eqref{llpinq}-\eqref{llpeq}.
Since  $Q=\diag(q)$ is positive definite, LLP is a strictly convex quadratic optimization  problem and hence has always a unique optimal solution, i.e., the set $\opt{\mathrm{LLP}}$ is a singleton. 
\par Bilevel optimization problems are in general nonconvex and have combinatorial nature. Many algorithms and approaches have been developed to solve different classes of bilevel problems. Recent surveys on bilevel optimization can be found in \citep{dempe2020bilevel} and \citep{luo1996mathematical}.  In contrast to existing methods that deal with rather more general bilevel optimization problems, our focus here  is to exploit the particular structure of \eqref{blp} in order to  introduce a convex  optimization  problem which has the same global optimum as the bilevel one.
The next section investigates the conditions under which the global optimal solution of the optimization \eqref{blp} can be  found by solving a convex  problem.
\section{Main Results}\label{sec:mr}
In this section, we will show that the set of global optima for  a specific   convex optimization problem is a subset of  the set of global optima for the optimization BLP, under some assumptions on the parameters of the problem.
Before dealing with the the optimization problem BLP, we consider two variations of this optimization problem.
First, we only consider  a lower bound on $y$ in \eqref{llpinq}. Later, we will consider an upper bound on $y$. Then, we come back to BLP \eqref{blp} to introduce a convex optimization problem  which can be used to find a subset of $\opt{\mathrm{BLP}}$.
Finally, we comment on the restrictions  of the proposed convex optimization for the \ac{rtm} platform.
\subsection{Lower Bound on $y$}
Consider the following bilevel optimization problem 
for which the decision variable $y$ has only a lower bound:
\begin{subequations}\label{blp1}
	\begin{alignat}{2}\mathrm{BLP1:}\qquad
		&\min_{x,y} \quad && \phi(x,y) \label{blp1of}\\
		& \mathrm{subject \ to} \quad && x \ge 0 \label{blp1x} \\ 
		&&& y=\opt{\mathrm{LLP1}} \label{blp1ll}
	\end{alignat}
\end{subequations}
where $\phi: \R^m\times \R^m \rightarrow\R$ is given by 
\begin{equation}
	\phi(x,y)=(x-p)^Ty,
\end{equation}
and LLP1 is as
\begin{subequations}\label{llp1}
	\begin{alignat}{2} \mathrm{LLP1:}\qquad
		&\min_y  \quad && \frac{1}{2}y^TRy+(c-x)^Ty \label{llp1of}\\
		&\mathrm{subject \ to} \quad && \ell \le y  \label{llp1inq}\\
		&&& Fy=d. \label{llp1eq}
	\end{alignat}
\end{subequations}
Here $R\in \R^{m \times m}$ is positive definite and  not necessarily diagonal and $F \in \R^{n\times m}$ has full row rank.  
Assume that   there exists $\bar y$  satisfying \eqref{llp1inq}-\eqref{llp1eq}
Since  LLP1 is a convex optimization problem, we can write the following necessary and sufficient KKT conditions to characterize  \eqref{blp1ll}:
\begin{gather}
	Ry+c-x+F^T\lambda-\mu=0,\label{KKTder}\\
	Fy=d, \label{KKTeq}\\
	0 \le \mu \bot y -\ell \ge 0 \label{KKTcoml}
\end{gather}
where $\mu\in \R^m$ and $\lambda \in \R^n$ are dual variables for the constraints \eqref{llp1inq} and  \eqref{llp1eq}, respectively.
The  dual variable $\lambda$ can be eliminated from KKT conditions \eqref{KKTder}-\eqref{KKTcoml}. 
First, we solve $y$ from \eqref{KKTder} as
\begin{equation}\label{eqy}
	y=R^{-1}(x+\mu-c-F^T\lambda),
\end{equation}
and then substitute $y$ in \eqref{KKTeq}:
\begin{equation}
	FR^{-1}(x+\mu-c-F^T\lambda)=d.
\end{equation}
Since $F$ has full row rank and $R$ is positive definite, $FR^{-1}F^T$ is nonsingular. Therefore, we obtain
\begin{equation}
	\lambda=(FR^{-1}F^T)^{-1}(FR^{-1}(x+\mu-c)-d).
\end{equation}
By substituting $\lambda$ in \eqref{eqy}, we can write  \eqref{KKTder}-\eqref{KKTcoml} as
\begin{gather} 
	y=M(x+\mu)+r, \label{KKTs1} \\ 
	0 \le \mu \bot y -\ell \ge 0  \label{KKTs2}
\end{gather}
where 
\begin{gather} \label{eqM}
	M=R^{-1}-R^{-1}F^T(FR^{-1}F^T)^{-1}FR^{-1},\\
	r=R^{-1}F^T(FR^{-1}F^T)^{-1}d-Mc.  \label{eqr}
\end{gather}
\begin{lemma}\label{lemM}
	The matrix $M$ is a positive semidefinite matrix.
\end{lemma}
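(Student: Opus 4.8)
The plan is to exhibit $M$ as a congruence transform of an orthogonal projector, which is automatically symmetric and positive semidefinite. First I would introduce the symmetric positive definite square root $R^{\frac{1}{2}}$ of $R$ (available since $R$ is positive definite), which is nonsingular with inverse $R^{-\frac{1}{2}}$, and set $G := F R^{-\frac{1}{2}} \in \R^{n\times m}$. Because $F$ has full row rank $n$ and $R^{-\frac{1}{2}}$ is nonsingular, $G$ has full row rank $n$ as well, so $GG^T = FR^{-1}F^T$ is nonsingular; this is exactly what is needed for the expression \eqref{eqM} defining $M$ to make sense.

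Next I would rewrite \eqref{eqM} in the factored form
\begin{equation*}
	M = R^{-\frac{1}{2}}\bigl(I_m - G^T(GG^T)^{-1}G\bigr)R^{-\frac{1}{2}}.
\end{equation*}
The key observation is that $\Pi := G^T(GG^T)^{-1}G$ is the orthogonal projector onto the row space of $G$: a direct computation gives $\Pi^T = \Pi$ and $\Pi^2 = \Pi$. Hence $I_m - \Pi$ is again a symmetric idempotent matrix (the orthogonal projector onto $\kr G$), so for every $w \in \R^m$ we have $w^T(I_m - \Pi)w = \|(I_m - \Pi)w\|^2 \ge 0$; in other words $I_m - \Pi$ is positive semidefinite.

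Finally I would conclude: $M^T = M$ is immediate from \eqref{eqM} (or from the displayed factorization), and for arbitrary $z \in \R^m$, setting $w := R^{-\frac{1}{2}}z$,
\begin{equation*}
	z^T M z = w^T(I_m - \Pi)w = \|(I_m - \Pi)w\|^2 \ge 0,
\end{equation*}
so $M$ is symmetric positive semidefinite, as claimed. I do not expect a genuine obstacle here; the only steps requiring a little care are checking that $GG^T$ is invertible (so that $M$ and $\Pi$ are well defined) and verifying the symmetry and idempotency of $\Pi$, both of which are routine linear-algebra calculations. An equivalent route, should one prefer to avoid square roots, is to note that $M$ is the Schur-complement-type matrix arising from the saddle-point system $\begin{bmatrix} R & F^T \\ F & 0\end{bmatrix}$ and to argue directly that $z^T M z$ equals the minimum of $v^T R v$ over $v$ with $Rv - Rz \in \ima F^T$, which is nonnegative; but the projector argument above is the most transparent.
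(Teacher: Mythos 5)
Your proposal is correct. The paper proves the same statement by observing that $M$ is the Schur complement of the positive semidefinite matrix $X=\begin{bmatrix} R^{-1} & R^{-1}F^T\\ FR^{-1} & FR^{-1}F^T\end{bmatrix}$ with respect to the block $FR^{-1}F^T$ (positive semidefiniteness of $X$ is seen from the factorization $X=\begin{bmatrix} R^{-\frac{1}{2}} \\ FR^{-\frac{1}{2}}\end{bmatrix}\begin{bmatrix} R^{-\frac{1}{2}} & R^{-\frac{1}{2}}F^T\end{bmatrix}$), and then invokes a cited theorem stating that Schur complements of positive semidefinite matrices are positive semidefinite. Your route is essentially a direct, self-contained proof of the special case of that theorem which is needed here: you factor $M$ itself as $R^{-\frac{1}{2}}(I_m-\Pi)R^{-\frac{1}{2}}$ with $\Pi$ the orthogonal projector onto the row space of $G=FR^{-\frac{1}{2}}$, so positive semidefiniteness follows from idempotency and congruence, with no external reference. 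Both arguments turn on the same square-root trick; yours buys self-containedness and an explicit identification of $\kr M \supseteq \ima(R^{-\frac{1}{2}}G^T)$, while the paper's phrasing in terms of Schur complements is shorter and dovetails with its later use of Schur-complement machinery to show that $M$ is an M-matrix.
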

\begin{proof}
Clearly $M$ is the Schur complement of 
\begin{equation}\label{schur}
	X=\begin{bmatrix}
		R^{-1} & R^{-1}F^T\\
		FR^{-1} & FR^{-1}F^T	
	\end{bmatrix}
\end{equation}
with respect to $ FR^{-1}F^T$. Since
$X=\begin{bmatrix}
	R^{-\frac{1}{2}} \\ F	R^{-\frac{1}{2}} 
\end{bmatrix}\begin{bmatrix}
	R^{-\frac{1}{2}} & 	R^{-\frac{1}{2}}F^T
\end{bmatrix}$, $X$ is positive semidefinite.
It follows from \citep[Theorem~1.12]{zhang2006schur} that $M$ is also positive semidefinite. 
  \end{proof}
As a result of \eqref{KKTs1} and \eqref{KKTs2}, BLP1 can be rewritten as the following  optimization problem:
\begin{subequations}\label{slp1}
	\begin{alignat}{2}\mathrm{SLP1:}\qquad
		&\min_{x,y} \quad &&  \phi(x,y)\\
		& \mathrm{subject \ to} \quad &&(x,y) \in S_1 \label{slp1c}
	\end{alignat}
\end{subequations}
where 
\begin{equation}
	S_1=\{(x,y)\ \mid  x \ge 0, \ y =M(x+\mu)+r, \ y \ge \ell, \ \mu^T(y-\ell)=0 \ \mathrm{for \ some} \  \mu \ge 0 \}.
\end{equation}
Since neither $\phi$ nor $S_1$ is convex, the optimization problem SLP1 is a nonconvex one. Nevertheless, a subset of the global minimizers of SLP1 can be captured by a convex optimization problem. 
\begin{theorem} \label{slp1tocvx1}
	Suppose that $\ell \le 0$. Consider the  optimization problem
	\begin{subequations}\label{cvx1}
		\begin{alignat}{2}\mathrm{CVX1:}\qquad
			&\min_{x,y} \quad &&  \phi(x,y)  \label{cvx1of}\\
			& \mathrm{subject \ to} \quad && (x,y) \in C_1 \label{cvx1c}
		\end{alignat}
	\end{subequations}
	where
	\begin{equation}
		C_1=\{(x,y) \mid x \ge 0, \ y=Mx+r, \ y \ge \ell \}.
	\end{equation} 
	Then, $\opt{\mathrm{CVX1}} \subseteq \opt{\mathrm{SLP1}}$. Furthermore, the optimization problem CVX1 is convex.
\end{theorem}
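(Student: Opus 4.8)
The plan is to treat the two assertions in turn: the convexity of CVX1, and the inclusion $\opt{\mathrm{CVX1}} \subseteq \opt{\mathrm{SLP1}}$. For convexity, I would eliminate $y$ by using the affine relation $y = Mx+r$ built into $C_1$: substituting it into $\phi$ gives $x \mapsto (x-p)^T(Mx+r) = x^TMx + (r-Mp)^Tx - p^Tr$, whose Hessian is $2M$. By Lemma~\ref{lemM}, $M$ is positive semidefinite, so this is a convex quadratic in $x$, and the surviving constraints $x\ge 0$ and $Mx+r\ge \ell$ are linear. Hence CVX1 is equivalent to a convex quadratic program, which settles the last sentence of the theorem.

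For the inclusion, the first easy observation is $C_1 \subseteq S_1$: taking the multiplier $\mu = 0$ in the definition of $S_1$ shows that every CVX1-feasible point is SLP1-feasible (note this uses only $\mu\ge 0$ and $\mu^T(y-\ell)=0$, not the hypothesis on $\ell$). In particular, any $(x^*,y^*)\in\opt{\mathrm{CVX1}}$ lies in $S_1$, so it only remains to show $\phi(x^*,y^*)\le\phi(x,y)$ for every $(x,y)\in S_1$.

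The key step — and the one I expect to carry the argument — is a multiplier-absorption trick. Given $(x,y)\in S_1$ with witness $\mu\ge 0$, set $x' := x+\mu$ and $y' := y$. Then $x'\ge 0$, $y' = M(x+\mu)+r = Mx'+r$, and $y' = y\ge\ell$, so $(x',y')\in C_1$. Computing the objective, $\phi(x',y') = (x+\mu-p)^Ty = \phi(x,y) + \mu^Ty$, and since $\mu^T(y-\ell)=0$ we have $\mu^Ty = \mu^T\ell \le 0$, because $\mu\ge 0$ and $\ell\le 0$ — this is exactly where the assumption $\ell\le 0$ is used. Hence $\phi(x',y')\le\phi(x,y)$. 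Combining this with the optimality of $(x^*,y^*)$ over $C_1$ yields $\phi(x^*,y^*)\le\phi(x',y')\le\phi(x,y)$; as $(x,y)\in S_1$ was arbitrary and $(x^*,y^*)\in S_1$, we conclude $(x^*,y^*)\in\opt{\mathrm{SLP1}}$. The only genuine insight needed is spotting the map $x'=x+\mu$ that folds the complementarity multiplier back into the price variable without increasing the objective; the remaining manipulations are routine bookkeeping.
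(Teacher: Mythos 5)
Your proof is correct and takes essentially the same route as the paper: your multiplier-absorption map $x' = x+\mu$ is exactly the content of the paper's Lemma~\ref{lemlb}, and the remaining steps (feasibility of $C_1$ inside $S_1$ via $\mu=0$, convexity of the objective on the affine set $y=Mx+r$ from positive semidefiniteness of $M$) match the paper's argument. Your one-line derivation $\mu^Ty=\mu^T\ell\le 0$ is a slightly cleaner way to obtain the key inequality than the paper's index-set case split, but it is the same idea.
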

To prove the theorem above, we need some auxiliary results.  
The following lemma plays an essential role in the proof of Theorem~\ref{slp1tocvx1}.
\begin{lemma}\label{lemlb}
	Consider the sets $S_1^\prime=\{(x,y)\ \mid  x \ge 0, \ y =M(x+\mu)+r^\prime, \ y \ge \ell, \ \mu^T(y-\ell)=0 \ \mathrm{for \ some} \  \mu \ge 0 \}$ and $C_1^\prime=\{(x,y) \mid x \ge 0, \ y=Mx+r^\prime, \ y \ge \ell \}$ where $r^\prime \in \R^m$ is an arbitrary vector.
	Suppose that  $\ell \le 0$. Then, for any  $(\bar{x},\bar{y}) \in S_1^\prime$ there exists  $(\hat{x},\bar{y})\in C_1^\prime$    such that  $\phi(\hat{x},\bar{y})\le \phi(\bar{x},\bar{y})$. 
\end{lemma}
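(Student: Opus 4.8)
The plan is to leave the $y$-coordinate of the point untouched and repair feasibility by folding the complementarity multiplier into the price vector. Fix an arbitrary $(\bar x,\bar y)\in S_1'$; by definition there is a multiplier $\bar\mu\ge 0$ with $\bar y=M(\bar x+\bar\mu)+r'$, $\bar y\ge\ell$, and $\bar\mu^T(\bar y-\ell)=0$. My candidate is $(\hat x,\bar y)$ with $\hat x:=\bar x+\bar\mu$. I would first verify $(\hat x,\bar y)\in C_1'$: the equality $\bar y=M\hat x+r'$ is just the defining relation of $S_1'$ rewritten, $\bar y\ge\ell$ is inherited directly, and $\hat x\ge 0$ because both $\bar x\ge 0$ and $\bar\mu\ge 0$. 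So membership in $C_1'$ is the easy half.

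Next I would bound the objective. Since $\phi(\cdot,\bar y)$ is affine in its first argument, $\phi(\hat x,\bar y)-\phi(\bar x,\bar y)=(\hat x-\bar x)^T\bar y=\bar\mu^T\bar y$. The complementarity condition $\bar\mu^T(\bar y-\ell)=0$ turns this surplus into $\bar\mu^T\ell$, and the standing hypothesis $\ell\le 0$ together with $\bar\mu\ge 0$ forces $\bar\mu^T\ell\le 0$. Hence $\phi(\hat x,\bar y)\le\phi(\bar x,\bar y)$, which is exactly the claim.

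There is no serious obstacle here; the single point that needs care is recognizing that complementarity slackness is precisely what rewrites the surplus term $\bar\mu^T\bar y$ as $\bar\mu^T\ell$, so that the sign hypothesis on $\ell$ can be brought to bear. Two remarks are worth flagging in the writeup. First, the positive semidefiniteness of $M$ from Lemma~\ref{lemM} plays no role in this particular lemma — it will be needed only later, for the convexity of CVX1. Second, nothing in the argument uses the specific value of $r'$, which is why the statement is posed for an arbitrary $r'\in\R^m$; that flexibility is what will let the same lemma be reinvoked in the upper-bound analysis.
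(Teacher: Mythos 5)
Your proof is correct and follows essentially the same route as the paper: both take $\hat x=\bar x+\bar\mu$, verify membership in $C_1'$ the same way, and use complementarity together with $\ell\le 0$ to bound the objective. Your direct computation $\phi(\hat x,\bar y)-\phi(\bar x,\bar y)=\bar\mu^T\bar y=\bar\mu^T\ell\le 0$ is a cleaner rendering of the paper's index-set splitting, but it is the same argument.
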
 
\begin{proof}
Let $(\bar{x},\bar{y}) \in S_1^\prime$. 
Therefore, there exists $\bar \mu \ge 0$ such that
\begin{equation} \label{slp1cbar}
	\bar x \ge 0, \quad \bar y= M( \bar x+\bar \mu) +r^\prime, \quad \bar y\ge \ell, \quad \bar \mu^T (\bar y-\ell)=0. 
\end{equation} 
Take  $\hat x= \bar x +\bar  \mu$. Then, $\hat x \ge 0$, $M \hat x+r^\prime= \bar y \ge \ell$ and thus $(\hat x, \bar y) \in C_1^\prime$. 
\newline
Now, we define index  sets $\alpha \subseteq \{1, 2, \dots,m\}$ and  $\bar{\alpha}=\{1, 2,  \dots,m\} \setminus \alpha$ such that $\bar{\mu}_\an=0$ and $\bar{\mu}_\ab >0$.
Then, based on \eqref{slp1cbar}, we have 
\begin{gather}
	\hat x_\an = \bar x_\an, \quad \hat x_\ab > \bar x_\ab, \\
	\bar y_\an \ge \ell_\an, \quad \bar y_\ab = \ell_\ab.
\end{gather}
Since $\ell \le 0$, we have
\begin{gather}
	\hat x_\an=\bar x_\an \Longrightarrow (\hat x_\an-p_\an)^T\bar y_\an=(\bar x_\an-p_\an)^T \bar y_\an, \\
	\hat x_\ab > \bar  x_\ab \quad \text{and} \quad  y_\ab=\ell_\ab \le 0 \Longrightarrow (\hat x_\ab-p_\ab)^T\bar y_\ab\le (\bar x_\ab-p_\ab)^T \bar y_\ab,
\end{gather} 
and consequently $\phi(\hat x, \bar y)\le \phi(\bar x , \bar y)$.
  \end{proof}
Now, we are ready to prove Theorem~\ref{slp1tocvx1}.
\begin{proof}[Proof of Theorem~\ref{slp1tocvx1}.]
Let $(x^*,y^*) \in \opt{\mathrm{CVX1}}$ and $(\bar x, \bar y) \in S_1$. 
It follows form Lemma~\ref{lemlb}, with the choices $r^\prime = r$, $S_1^\prime=S_1$ and  $C_1^\prime=C_1$, that 
there exists $(\hat x, \bar y) \in C_1$ such that
\begin{equation}
	\phi(\hat x,\bar  y) \le  \phi(\bar x, \bar  y).
\end{equation}
Therefore, we see that 
\begin{equation}
	\phi(x^*,y^*) \le \phi(\hat x,\bar y) \le  \phi(\bar x, \bar  y).  
\end{equation}
This means that $\phi(x^*, y^*) \le \phi(\bar x, \bar y)$ for all $(\bar x, \bar y) \in S_1$.
Since $C_1 \subseteq S_1$, $(x^*,y^*) \in S_1$ and hence $(x^*,y^*) \in \opt{\mathrm{SLP1}}$. Thus, we can conclude  that  $\opt{\mathrm{CVX1}} \subseteq \opt{\mathrm{SLP1}}$. 
Note that, $C_1$ is a polyhedron. Furthermore, $\phi$ is convex since $M$ is positive semidefinite due to Lemma~\ref{lemM}. Therefore, CVX1 is a convex optimization problem. 
  \end{proof}
\subsection{Upper Bound on $y$}
Next, we consider the following bilevel problem for which the decision variable $y$ has only an upper bound:  
\begin{subequations}\label{blp2}
	\begin{alignat}{2}\mathrm{BLP2:}\qquad
		&\min_{x,y}\quad &&  \phi(x,y) \label{blp2of}\\
		& \mathrm{subject \ to} \quad && x \ge 0  \label{bl2l} \\ 
		&&& y=\opt{\mathrm{LLP2}} \label{blp2ll}
	\end{alignat}
\end{subequations}
where LLP2 is given by
\begin{subequations}\label{llp2}
	\begin{alignat}{2} \mathrm{LLP2:}\qquad
		&\min_y  \quad && \frac{1}{2}y^TRy+(c-x)^Ty \label{llp2of}\\
		&\mathrm{subject \ to} \quad && y \le u \label{llp2inq}\\
		&&& Fy=d. \label{llp2eq}
	\end{alignat}
\end{subequations}
Assume that there exists $\bar y$ which  satisfies \eqref{llp2inq}-\eqref{llp2eq}. 
In a procedure similar to what we had for BLP1, we can show that BLP2 can be written as the following optimization problem using the KKT conditions:
\begin{subequations}\label{slp2}
	\begin{alignat}{2}\mathrm{SLP2:}\qquad
		&\min_{x,y} \quad &&  \phi(x,y) \label{slp2of}\\
		& \mathrm{subject \ to} \quad &&(x,y) \in S_2 \label{slp2c}
	\end{alignat}
\end{subequations}
where 
\begin{equation}
	S_2=\{(x,y)\ \mid  x \ge 0, \ y =M(x-\nu)+r, \ y \le u, \ \nu^T(u-y)=0 \ \mathrm{for \ some} \  \nu \ge 0 \},
\end{equation}
and  $M$ and $r$ are as in \eqref{eqM} and \eqref{eqr}.  Note that $\nu$ is the dual variable for the constraint \eqref{llp2inq}.
\par Again here we want to find sufficient conditions such that at least a global optimum of SLP2 can be found using a convex optimization problem. The main assumption here is on  the structure of  $M$. The following definitions elaborate on this specific structure.
\begin{definition} A matrix $N \in \R^{k\times k}$ is called
	\begin{itemize}
		\item a Z-matrix if its  off-diagonal entries are nonpositive.
		\item an M-matrix if it is an Z-matrix and the real part of its eigenvalues are nonnegative.  
	\end{itemize}
\end{definition}
\begin{remark}
	In particular, a positive semidefinite matrix is an M-matrix if  its off-diagonal entries are nonpositive. 
\end{remark}
Now, we can state the main result concerning the the optimization problem SLP2.
\begin{theorem}\label{slp2tocvx2}
	Suppose that $M$ is an  M-matrix, $u\ge0$ and $u> r$. Consider the following optimization problem:
	\begin{subequations}\label{cvx2}
		\begin{alignat}{2}\mathrm{CVX2:}\qquad
			&\min_{x,y} \quad &&  \phi(x,y)  \label{cvx2of}\\
			& \mathrm{subject \ to} \quad && (x,y) \in C_2 \label{cvx2c}
		\end{alignat}
	\end{subequations}
	where 
	\begin{equation}
		C_2=\{(x,y) \mid x \ge 0, \ y=Mx+r, \ y \le u \}.
	\end{equation}
	Then, $\opt{\mathrm{CVX2}} \subseteq \opt{\mathrm{SLP2}}$. Furthermore, the optimization problem CVX2 is convex.
\end{theorem}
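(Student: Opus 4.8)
The plan is to follow the architecture of the proof of Theorem~\ref{slp1tocvx1}. The inclusion $C_2\subseteq S_2$ is immediate (take $\nu=0$ in the description of $S_2$), so it suffices to show that the optimal value of CVX2 is at most that of SLP2 and that CVX2 is convex; and for the value comparison it is enough to establish an analogue of Lemma~\ref{lemlb}: for every $(\bar x,\bar y)\in S_2$ there exists $(\hat x,\hat y)\in C_2$ with $\phi(\hat x,\hat y)\le\phi(\bar x,\bar y)$. Fix $(\bar x,\bar y)\in S_2$ together with a multiplier $\bar\nu\ge0$ as in the definition of $S_2$, and set $\bar z:=\bar x-\bar\nu$. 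The obvious candidate is $(\hat x,\hat y)=(\bar z,\bar y)$: by construction $\bar y=M\bar z+r$ and $\bar y\le u$, and using the complementarity relation $\bar\nu^{T}(u-\bar y)=0$ one has $\phi(\bar z,\bar y)=(\bar z-p)^{T}\bar y=(\bar x-p)^{T}\bar y-\bar\nu^{T}\bar y=\phi(\bar x,\bar y)-\bar\nu^{T}u\le\phi(\bar x,\bar y)$, the last step because $u\ge0$ and $\bar\nu\ge0$. So the only condition that is not granted for free --- and, I expect, the real content of the theorem --- is the nonnegativity $\hat x=\bar z\ge0$; by contrast, the choice $\hat x=\bar x+\bar\mu$ used in Lemma~\ref{lemlb} was nonnegative trivially, which is why the upper-bound case genuinely needs the extra hypotheses.

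To prove $\bar x-\bar\nu\ge0$ I would argue by contradiction. Suppose $\beta:=\{i:\bar z_i<0\}$ is nonempty, and let $\bar\beta$ be its complement. For $i\in\beta$ we have $\bar x_i\ge0$, hence $\bar\nu_i>0$, hence (by complementarity) $\bar y_i=u_i$; combined with $\bar y=M\bar z+r$ and the assumption $u>r$ this gives $(M\bar z)_i=u_i-r_i>0$, so $(M\bar z)_\beta>0$ entrywise. Now split $(M\bar z)_\beta=M_{\beta\beta}\bar z_\beta+M_{\beta\bar\beta}\bar z_{\bar\beta}$. The block $M_{\beta\bar\beta}$ is made of off-diagonal entries of $M$, hence entrywise nonpositive by the Z-matrix part of the M-matrix assumption, while $\bar z_{\bar\beta}\ge0$; therefore $M_{\beta\bar\beta}\bar z_{\bar\beta}\le0$, and so $M_{\beta\beta}\bar z_\beta\ge(M\bar z)_\beta>0$. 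But $\bar z_\beta<0$, whence $\bar z_\beta^{T}(M_{\beta\beta}\bar z_\beta)<0$, contradicting positive semidefiniteness of $M$ (Lemma~\ref{lemM}), which is inherited by the principal submatrix $M_{\beta\beta}$. Hence $\beta=\emptyset$, so $\bar z\ge0$ and $(\bar z,\bar y)\in C_2$; this proves the lemma.

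Given the lemma, the theorem follows just as Theorem~\ref{slp1tocvx1}: for $(x^{*},y^{*})\in\opt{\mathrm{CVX2}}$ and any $(\bar x,\bar y)\in S_2$, take $(\bar z,\bar y)\in C_2$ from the lemma, so that $\phi(x^{*},y^{*})\le\phi(\bar z,\bar y)\le\phi(\bar x,\bar y)$; since $C_2\subseteq S_2$ we also have $(x^{*},y^{*})\in S_2$, and therefore $(x^{*},y^{*})\in\opt{\mathrm{SLP2}}$. Convexity of CVX2 is as in the lower-bound case: $C_2$ is a polyhedron, and eliminating $y$ via $y=Mx+r$ rewrites $\phi$ as $x^{T}Mx+(r-Mp)^{T}x-p^{T}r$, a convex quadratic because $M\succeq0$ by Lemma~\ref{lemM}. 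I do not anticipate any difficulty in these bookkeeping steps; all of the hypotheses are consumed in the nonnegativity claim and the objective comparison --- the Z-matrix structure and $u>r$ for the former (the strictness of $u>r$ serving exactly to exclude the borderline case $u_\beta=r_\beta$), and $u\ge0$ for the latter. One point worth double-checking while writing up is whether the lemma should be stated, like Lemma~\ref{lemlb}, with an arbitrary right-hand side $r'$ (requiring $u>r'$), in case it is needed later for the two-sided problem BLP; for Theorem~\ref{slp2tocvx2} itself the version with $r'=r$ is all that is used.
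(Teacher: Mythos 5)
Your argument is correct, and it reaches the key reduction lemma by a genuinely different route than the paper. The paper proceeds in two stages: it first passes from $S_2$ to an intermediate set $S_2^\prime$ obtained by adding the extra complementarity $\nu^Tx=0$ (Lemma~\ref{lemub}, a case analysis on the sign of $\bar x_{\bar\alpha}-\bar\nu_{\bar\alpha}$ that uses only $u\ge 0$), and then proves $S_2^\prime=C_2$ (Lemma~\ref{lemrnu}) by a theorem-of-the-alternative argument: for every nonempty index set $\bar\alpha$ the alternative system is shown to be solvable by combining a feasibility result for positive semidefinite systems (Lemma~\ref{lemcp}) with the nonpositivity of the off-diagonal block $M_{\alpha\bar\alpha}$, so the only surviving case is $\nu=0$. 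You instead take $\hat x=\bar x-\bar\nu$ in one shot, obtain the objective decrease directly from $\bar\nu^T\bar y=\bar\nu^Tu\ge 0$, and establish $\hat x\ge 0$ by contradiction: on the set $\beta$ where $\hat x<0$, complementarity forces $\bar y_\beta=u_\beta$, hence $(M\hat x)_\beta=u_\beta-r_\beta>0$; the Z-matrix structure then pushes this positivity onto $M_{\beta\beta}\hat x_\beta>0$, which together with $\hat x_\beta<0$ contradicts positive semidefiniteness of the principal submatrix $M_{\beta\beta}$. This is more elementary (no Farkas-type alternatives are needed) and consumes the hypotheses in the same places as the paper: $u\ge 0$ for the objective comparison, and the M-matrix structure plus the strictness of $u>r$ for the nonnegativity of $\hat x$. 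The remaining bookkeeping --- $C_2\subseteq S_2$ via $\nu=0$, and convexity of $\phi$ on $C_2$ from $M\succeq 0$ (Lemma~\ref{lemM}) --- matches the paper's proof.
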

The following results are needed to prove Theorem~\ref{slp2tocvx2}. 
\begin{lemma}\label{lemub}
	Consider the following optimization problem:
	\begin{subequations} \label{slp2p}
		\begin{alignat}{2}\mathrm{SLP2^\prime:}\qquad
			&\min_{x,y} \quad &&  \phi(x,y) \label{slp2pof}\\
			& \mathrm{subject \ to} \quad && (x,y) \in S_2^\prime \label{slp2pc}
		\end{alignat}
	\end{subequations}
	where 
	\begin{equation}
		S_2^\prime=\{(x,y) \mid x \ge 0, \ y =M(x-\nu)+r, \ y  \le u,  \ \nu^T(u-y)=0, \ \nu^Tx=0  \ \mathrm{for \ some} \  \nu \ge 0  \}.
	\end{equation}
	Suppose that $u\ge0$. 
	Then, for any  $(\bar{x},\bar{y}) \in S_2$ there exists  $(\hat{x},\bar{y})\in S_2^\prime$    such that  $\phi(\hat{x},\bar{y})\le \phi(\bar{x},\bar{y})$. 
\end{lemma}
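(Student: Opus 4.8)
The plan is to exploit that the defining relation $y=M(x-\nu)+r$ in the description of $S_2$ depends on $x$ and $\nu$ only through the difference $x-\nu$, so the pair $(\bar x,\bar\nu)$ may be replaced by any other nonnegative pair with the same difference without changing the $y$-coordinate. Given $(\bar x,\bar y)\in S_2$ with a certificate $\bar\nu\ge 0$ (so $\bar x\ge 0$, $\bar y=M(\bar x-\bar\nu)+r$, $\bar y\le u$, $\bar\nu^T(u-\bar y)=0$), I would set $z=\bar x-\bar\nu$ and define $\hat x$ and $\hat\nu$ as the entrywise positive and negative parts of $z$, i.e.\ $\hat x_i=\max(z_i,0)$ and $\hat\nu_i=\max(-z_i,0)$. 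Then $\hat x\ge 0$, $\hat\nu\ge 0$, $\hat x-\hat\nu=z=\bar x-\bar\nu$, and $\hat\nu^T\hat x=0$; in particular $M(\hat x-\hat\nu)+r=\bar y\le u$, so the candidate $(\hat x,\bar y)$ has the right $y$-coordinate and already satisfies the extra complementarity $\nu^Tx=0$ that distinguishes $S_2'$ from $S_2$.

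Next I would verify the remaining membership condition $\hat\nu^T(u-\bar y)=0$. Since $\bar x\ge 0$ we have $z=\bar x-\bar\nu\ge-\bar\nu$, hence $-z\le\bar\nu$ and therefore $0\le\hat\nu=\max(-z,0)\le\bar\nu$ entrywise. Because $u-\bar y\ge 0$ and $\bar\nu^T(u-\bar y)=0$, the squeeze $0\le\hat\nu^T(u-\bar y)\le\bar\nu^T(u-\bar y)=0$ gives $\hat\nu^T(u-\bar y)=0$, so $(\hat x,\bar y)\in S_2'$.

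Finally I would establish $\phi(\hat x,\bar y)\le\phi(\bar x,\bar y)$. As $\phi(x,y)=(x-p)^Ty$ and $\bar y$ is fixed, this is equivalent to $(\bar x-\hat x)^T\bar y\ge 0$. From $\hat x_i=\max(\bar x_i-\bar\nu_i,0)\le\max(\bar x_i,0)=\bar x_i$ we get $\bar x-\hat x\ge 0$, so it suffices to argue componentwise: if $\bar x_i-\hat x_i>0$ then necessarily $\bar\nu_i>0$ (otherwise $\hat x_i=\bar x_i$), whence $\bar\nu^T(u-\bar y)=0$ forces $\bar y_i=u_i$, and the hypothesis $u\ge 0$ gives $\bar y_i\ge 0$; if $\bar x_i-\hat x_i=0$ the $i$th term vanishes. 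Summing yields $(\bar x-\hat x)^T\bar y\ge 0$, as required.

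I expect the last step to be the crux, and it is the only place the hypothesis $u\ge 0$ is used: one must control the sign of $\bar y$ precisely on the coordinates where $\hat x$ was strictly decreased, which works only because the complementarity $\bar\nu^T(u-\bar y)=0$ pins $\bar y_i$ to $u_i$ there and $u\ge 0$ then makes it nonnegative. Everything else is routine bookkeeping about positive and negative parts, and the construction conveniently delivers $\hat\nu^T\hat x=0$ for free, so no separate effort is needed for that constraint.
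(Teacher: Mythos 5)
Your proof is correct and takes essentially the same route as the paper: the paper also replaces $(\bar x,\bar\nu)$ by the entrywise positive and negative parts of $\bar x-\bar\nu$ (written there via index sets on which $\bar\nu$ vanishes or is positive), checks membership in $S_2'$, and then uses $\bar\nu_i>0\Rightarrow\bar y_i=u_i\ge 0$ to compare objective values. Your squeeze argument for $\hat\nu^T(u-\bar y)=0$ is a minor, valid variation on the paper's direct index-set verification.
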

\begin{proof}
Let $(\bar x, \bar y) \in S_2$. Therefore, there exists $ \bar \nu \ge 0$ such that
\begin{equation}\label{lp2cbar}
	\bar x \ge 0, \quad \bar y = M(\bar x-\bar \nu) +r \le u, \quad \bar\nu^T(u-\bar y)=0.
\end{equation}
We define index sets $\alpha\subseteq\{1, 2, \dots,m\}$ and $\ab=\{1, 2, \dots,m\}\setminus\an$ such that $\bar \nu_\an=0$ and $\bar \nu_\ab>0$. Then, \eqref{lp2cbar} can be rewritten based on $\an$ and $\ab$ as 
\begin{gather}
	\bar x_\an \ge 0 , \quad \bar x_\ab \ge 0, \\
	\bar y_\an=M_{\an\an}\bar x_\an+M_{\an\ab}(\bar x_\ab-\bar \nu_\ab)+r_\an \le u_\an, \quad \bar \nu_\an=0,\\
	\bar y_\ab=M_{\ab\an}\bar x_\an+M_{\ab\ab}(\bar x_\ab-\bar \nu_\ab)+r_\ab = u_\ab, \quad \bar \nu_\ab>0. 
\end{gather}	
We can choose $\hat x,\hat \nu$ based on $\bar x, \bar \nu$ as 
\begin{gather}
	\hat x_\an=\bar x_\an \ge 0, \quad \hat \nu_\an=\bar \nu_\an=0,\\
	\begin{bmatrix}
		\hat x_\ab \\ \hat \nu_\ab
	\end{bmatrix}=
	\begin{cases}
		\begin{bmatrix}
			\bar x_\ab-\bar \nu_\ab \\
			0
		\end{bmatrix}
		& \text{if }\quad \bar x_\ab-\bar \nu_\ab \ge 0, \\[30pt]
		\begin{bmatrix}
			0 \\ 
			\bar \nu_\ab - \bar x_\ab
		\end{bmatrix}
		& \text{if }\quad \bar x_\ab-\bar \nu_\ab < 0. 
	\end{cases} 
\end{gather}
Note that these choices imply that 
\begin{equation}
	0 \le \hat x_\ab < \bar x_\ab\quad \text{and} \quad  \hat \nu_\ab \ge 0.
\end{equation}
Then, we have $\hat x \ge 0$, $M(\hat x -\hat \nu)+r=\bar y \le u$, $\hat \nu^T(u-\bar y)=0$, $\hat \nu^T\hat x=0$ and $\hat{\nu} \ge 0$. Thus, $(\hat x, \bar y) \in S_2^\prime$.
Since $u \ge 0$, we have the following implications:
\begin{gather}
	\hat x_\an =\bar x_\an \Longrightarrow (\hat x_\an-p_\an)^T\bar y_\an =(\bar x_\an-p_\an)^T\bar y_\an, \\
	\hat x_\ab < \bar x_\ab \quad \text{and} \quad  \bar y_\ab = u_\ab \ge 0 \Longrightarrow (\hat x_\ab-p_\ab)^T\bar y_\ab \le (\bar x_\ab-p_\ab)^T\bar y_\ab
\end{gather}
which conclude that $\phi(\hat{x}, \bar{y})\le \phi(\bar{x},\bar{y})$. 
  \end{proof}
The set $S_2^\prime$ is a nonconvex set due to complementarity terms.
In what follows, we will show that under some conditions on $M,u$ and $r$, the set $S_2^\prime$ is equal to the  polyhedral set $C_2$ in CVX2.
\par
Let $(\bar x, \bar y) \in S_2^\prime$. Therefore, there exists $ \bar \nu \ge 0$ such that 
\begin{equation}\label{s2p}
	\bar{x} \ge 0, \quad \bar{y}=M(\bar{x}-\bar{\nu})+r \le u, \quad \bar  \nu^T(u-\bar y)=0, \quad \bar \nu^T \bar x=0.
\end{equation} 
We define  index sets $\alpha\subseteq\{1,2,\dots,m\}$ and $\ab=\{1,2,\dots,m\}\setminus\an$ such that $\bar \nu_\an=0$ and $\bar \nu_\ab>0$. Then, the following implications follow from \eqref{s2p}:
\begin{gather}
	\bar \nu_\an=0 \Longrightarrow \bar x_\an \ge 0\quad \text{and} \quad \bar y_\an=M_{\an\an}\bar x_\an-M_{\an\ab}\bar \nu_\ab+r_\an \le u_\an, \\ 
	\bar \nu_\ab > 0 \Longrightarrow \bar x_\ab = 0\quad \text{and} \quad \bar y_\ab=M_{\ab\an} \bar x_\an-M_{\ab\ab}\bar \nu_\ab+r_\ab = u_\ab.
\end{gather}
Consequently, we can write the following system of (in)equalities for $S_2^\prime$:
\begin{equation}\label{system*}
	\begin{aligned}
		\bar y_\an=M_{\an\an}\bar x_\an-M_{\an\ab}\bar \nu_\ab+r_\an \le u_\an, \quad  \bar x_\an \ge 0, \\ 
		\bar y_\ab=M_{\ab\an}\bar x_\an-M_{\ab\ab}\bar \nu_\ab+r_\ab = u_\ab,  \quad \bar \nu_\ab > 0.
	\end{aligned}
\end{equation}
Next, we elaborate on some tools to work with   linear (in)equalities.
\begin{lemma}[{\citealt[Theorem~3.8.3]{cottle2009linear}}]\label{lemcp}
	Let $N\in \R^{k\times k}$ be a positive semidefinite matrix. Then,
	for every index set $\beta \subseteq\{1,2,\dots,k\}$, the inequality  system  
	\begin{equation*}
		N_{\beta\beta}\xi \le 0, \qquad \xi \le 0
	\end{equation*}
	has a nonzero solution $\xi$.
\end{lemma}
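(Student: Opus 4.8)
Although Lemma~\ref{lemcp} is quoted from \citet{cottle2009linear}, I would prove it directly. The plan is to first reduce to the equivalent statement obtained via the substitution $z=-\xi$: that for every positive semidefinite square matrix $A$ there is a nonzero $z\ge 0$ with $Az\ge 0$. This reduction is immediate once one notes that $N_{\beta\beta}$ is itself positive semidefinite --- extending a test vector $\eta$ by zeros off $\beta$ to a vector $\tilde\eta$ shows $\eta^TN_{\beta\beta}\eta=\tilde\eta^TN\tilde\eta\ge 0$ --- so applying the reduced statement to $A=N_{\beta\beta}$ and setting $\xi=-z$ recovers the lemma.

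Next I would construct such a $z$ by minimizing the quadratic form over the standard simplex: consider $\min\{z^TAz\mid z\ge 0,\ \mathds{1}^Tz=1\}$, with $\mathds{1}$ the all-ones vector. Since the feasible set is nonempty and compact, a minimizer $z^*$ exists, and $z^*\ge 0$ with $z^*\ne 0$ because $\mathds{1}^Tz^*=1$. As all constraints are affine, the KKT conditions hold at $z^*$: there are $\mu\ge 0$ and $\lambda\in\R$ with $2Az^*=\mu-\lambda\mathds{1}$ and $\mu_iz^*_i=0$ for all $i$. Left-multiplying by $(z^*)^T$ and using complementarity gives $2(z^*)^TAz^*=-\lambda\,\mathds{1}^Tz^*=-\lambda$, hence $\lambda=-2(z^*)^TAz^*\le 0$ since $A$ is positive semidefinite; therefore $2Az^*=\mu+(-\lambda)\mathds{1}\ge 0$, and $z=z^*$ is the desired vector.

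The step I would be most careful about is the appeal to the KKT conditions; since the feasible region is a polytope defined by affine constraints, a constraint qualification holds automatically, so this is legitimate. If one prefers to avoid multipliers entirely, the same conclusion follows from a direct variational comparison: for $t\to 0^+$ the perturbations $z^*+t(e_j-e_i)$ (with $e_j$ the standard unit vectors and $i$ any index in the support of $z^*$) are feasible, so $(Az^*)_j\ge(Az^*)_i$ for every $j$ and every $i$ in the support; the common value of $(Az^*)_i$ on the support then equals $(z^*)^TAz^*\ge 0$, whence $Az^*\ge 0$. Either route closes the proof.
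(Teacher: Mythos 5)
Your proof is correct, but note that the paper itself offers no proof of this lemma: it is imported verbatim as Theorem~3.8.3 of \citet{cottle2009linear}, so there is nothing internal to compare against. What you have supplied is a clean, self-contained replacement for that citation. The reduction is sound ($N_{\beta\beta}$ inherits positive semidefiniteness from $N$ by zero-padding test vectors, and $z=-\xi$ converts the system to $Az\ge 0$, $z\ge 0$, $z\ne 0$), and the simplex-minimization argument works: existence of $z^*$ follows from compactness, the KKT conditions are legitimate because the constraints are affine, and the sign bookkeeping $\lambda=-2(z^*)^TAz^*\le 0$ giving $2Az^*=\mu+(-\lambda)\mathds{1}\ge 0$ is right. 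The multiplier-free variant is also correct: feasibility of $z^*+t(e_j-e_i)$ for small $t>0$ and $i$ in the support yields $(Az^*)_j\ge(Az^*)_i$, the value is constant on the support and equals $(z^*)^TAz^*\ge 0$, hence $Az^*\ge 0$. The only pedantic caveat is the degenerate case $\beta=\emptyset$, where the statement is vacuous (there is no nonzero zero-dimensional vector); this is harmless here since the paper only invokes the lemma in Lemma~\ref{lemrnu} with $\beta=\bar\alpha$ nonempty, but you may wish to state the nonemptiness assumption explicitly. Incidentally, your argument proves the stronger intermediate fact that every positive semidefinite (indeed every copositive) matrix admits a nonzero nonnegative $z$ with $Az\ge 0$, which is essentially the route taken in the cited reference as well.
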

\begin{lemma}[{\citealt[Theorem 2.7.9]{cottle2009linear}}]\label{lemfarkas}
	Let $A\in \R^{k_1\times k_2}$ and $b\in \R^{k_1}$ be given. Exactly one of the following statements holds:
	\begin{enumerate}[label=\roman*)]
		\item  There exists $w \ge 0$ such that $Aw \le b$.
		\item There exists $z \ge 0$ such that  $A^Tz \ge 0 \quad \text{and} \quad   b^Tz <0$.
	\end{enumerate}
\end{lemma}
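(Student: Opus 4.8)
The plan is to obtain this theorem of the alternative from the classical Farkas lemma by a slack-variable reformulation. First I would rewrite statement~(i): the existence of $w \ge 0$ with $Aw \le b$ is equivalent to the existence of $w \ge 0$ together with a slack vector $s \ge 0$ satisfying
\begin{equation*}
	\begin{bmatrix} A & I_{k_1}\end{bmatrix}\begin{bmatrix} w \\ s\end{bmatrix} = b,
\end{equation*}
that is, to $b$ lying in the convex cone generated by the columns of $\begin{bmatrix} A & I_{k_1}\end{bmatrix}$. The \emph{at most one} half of the alternative is then elementary and I would record it directly: if $w \ge 0$ satisfies $Aw \le b$ and $z \ge 0$ satisfies $A^Tz \ge 0$, then taking the inner product of $Aw \le b$ with $z \ge 0$ gives $0 \le (A^Tz)^Tw = z^T(Aw) \le b^Tz$, which contradicts $b^Tz < 0$.

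For the \emph{at least one} half I would invoke the Farkas lemma in its equality form: for a matrix $M$ and vector $b$, either $Mx = b$ admits a solution $x \ge 0$, or there is a $z$ with $M^Tz \ge 0$ and $b^Tz < 0$, and never both. Applying this with $M = \begin{bmatrix} A & I_{k_1}\end{bmatrix}$ and $x = \col(w,s)$, the failure of statement~(i) is equivalent to the existence of $z$ with $\begin{bmatrix} A & I_{k_1}\end{bmatrix}^Tz \ge 0$ and $b^Tz < 0$. Since $\begin{bmatrix} A & I_{k_1}\end{bmatrix}^Tz = \col(A^Tz, z)$, the inequality $\begin{bmatrix} A & I_{k_1}\end{bmatrix}^Tz \ge 0$ unpacks into exactly $A^Tz \ge 0$ and $z \ge 0$ simultaneously; hence the failure of~(i) is precisely statement~(ii), which finishes the argument.

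The only real issue is whether the classical Farkas lemma may be used as a black box; since the present lemma is just a repackaging of it, and the paper already relies on \citealt{cottle2009linear} for such polyhedral facts, I would cite it there and keep the slack-variable reduction above as the proof. If a fully self-contained argument were wanted, the substantive step would be Farkas itself — most cleanly via the closedness of the finitely generated cone spanned by the columns of $\begin{bmatrix} A & I_{k_1}\end{bmatrix}$ together with a separating-hyperplane argument for the point $b$ against that cone — but this is standard and beyond what the paper needs to develop, especially since the lemma is used here only as a tool to analyze the linear (in)equality system \eqref{system*}.
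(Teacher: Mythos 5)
Your argument is correct. Note, however, that the paper supplies no proof of this statement at all: it is imported as a black box, cited verbatim as Theorem~2.7.9 of \citealt{cottle2009linear}, so there is nothing in the paper to compare your derivation against. Your proof is the standard one and is sound on every point --- the inner-product computation $0 \le z^T A w \le b^T z$ correctly rules out both alternatives holding at once, and the slack-variable reduction to the equality form of Farkas' lemma (with $[A \;\; I_{k_1}]^T z = \col(A^Tz, z)$ unpacking the dual system into exactly $A^Tz \ge 0$, $z \ge 0$, $b^Tz < 0$) correctly shows that at least one alternative holds; for the purposes of this paper, simply retaining the citation is the appropriate choice.
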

\par The following lemma provides sufficient conditions for \eqref{system*} and $S_2^\prime$ to be feasible for the index set $\alpha=\{1,2,\dots,m\}$ and infeasible $\alpha \neq \{1,2,\dots,m\}$.
\begin{lemma}\label{lemrnu}
	Suppose that $M$ is an   {\normalfont M}-matrix and $u > r$. Then, $S_2^\prime=C_2$, i.e.,  the system \eqref{system*} is only feasible when $\nu=0$.
\end{lemma}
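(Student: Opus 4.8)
The plan is to prove the identity $S_2'=C_2$ by two inclusions. The inclusion $C_2\subseteq S_2'$ is immediate: for $(x,y)\in C_2$ the choice $\nu=0$ gives $x\ge 0$, $y=M(x-0)+r\le u$, $\nu^T(u-y)=0$ and $\nu^Tx=0$. The content is $S_2'\subseteq C_2$, i.e.\ every $(\bar x,\bar y)\in S_2'$ has multiplier $\bar\nu=0$. So I would fix such a point with witness $\bar\nu\ge 0$, split coordinates into $\alpha=\{i:\bar\nu_i=0\}$ and $\bar\alpha=\{i:\bar\nu_i>0\}$ exactly as the excerpt does (complementarity then forces $\bar x_{\bar\alpha}=0$), and reduce to showing that the system \eqref{system*} is infeasible whenever $\bar\alpha\neq\varnothing$; once that is done, $\bar\nu=0$ and $\bar y=M\bar x+r\le u$, $\bar x\ge 0$, so $(\bar x,\bar y)\in C_2$.

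To prove that infeasibility I would invoke Lemmas~\ref{lemcp} and~\ref{lemfarkas}. Cast the $\bar\alpha$-block of \eqref{system*} --- the equality $M_{\bar\alpha\alpha}\bar x_\alpha-M_{\bar\alpha\bar\alpha}\bar\nu_{\bar\alpha}=u_{\bar\alpha}-r_{\bar\alpha}$ together with $\bar x_\alpha\ge 0$, $\bar\nu_{\bar\alpha}\ge 0$ --- in the form of alternative (i) of Lemma~\ref{lemfarkas}, with unknown $w=\col(\bar x_\alpha,\bar\nu_{\bar\alpha})\ge 0$ and the equality written as a pair of opposite inequalities $Aw\le b$; note that the right-hand-side block attached to the ``$\ge$'' copy equals $-(u_{\bar\alpha}-r_{\bar\alpha})$, which is strictly negative because $u>r$. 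Since infeasibility of this subsystem already makes \eqref{system*} infeasible, it suffices to produce $z\ge 0$ with $A^Tz\ge 0$ and $b^Tz<0$.

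That certificate comes from Lemma~\ref{lemcp} applied to $N:=M_{\bar\alpha\bar\alpha}$, which is positive semidefinite as a principal submatrix of the positive semidefinite $M$: with $\beta$ all of $\bar\alpha$ it yields a nonzero $\xi\le 0$ with $N\xi\le 0$, so $\zeta:=-\xi$ satisfies $\zeta\ge 0$, $\zeta\neq 0$ and $M_{\bar\alpha\bar\alpha}\zeta\ge 0$. I would then take $z$ to be $\zeta$ on the ``$\ge$'' copy of the equality and $0$ on all other rows. Then $A^Tz\ge 0$ reduces to $M_{\bar\alpha\bar\alpha}\zeta\ge 0$ (the $\bar\nu_{\bar\alpha}$-coordinates) and $-M_{\alpha\bar\alpha}\zeta\ge 0$ (the $\bar x_\alpha$-coordinates), the latter holding because the off-diagonal block $M_{\alpha\bar\alpha}$ is entrywise nonpositive --- the Z-matrix half of the M-matrix hypothesis --- and $\zeta\ge 0$; while $b^Tz=-(u_{\bar\alpha}-r_{\bar\alpha})^T\zeta<0$ from $u_{\bar\alpha}>r_{\bar\alpha}$ and $\zeta\ge 0$, $\zeta\neq 0$. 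Lemma~\ref{lemfarkas} then excludes alternative (i), which finishes the proof.

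The conceptual crux is recognizing that \emph{both} halves of ``$M$ is an M-matrix'' must be used at once: positive semidefiniteness, through Lemma~\ref{lemcp}, delivers a nonnegative direction $\zeta$ along which $M_{\bar\alpha\bar\alpha}$ is nonnegative, and the nonpositive off-diagonal pattern is what keeps $A^Tz$ nonnegative on the $\bar x_\alpha$-block. (One can also sidestep Lemma~\ref{lemfarkas}: the $\bar\alpha$-equality rewrites as $M_{\bar\alpha\bar\alpha}\bar\nu_{\bar\alpha}=M_{\bar\alpha\alpha}\bar x_\alpha+r_{\bar\alpha}-u_{\bar\alpha}$, whose right side is strictly negative entrywise since $M_{\bar\alpha\alpha}\le 0$, $\bar x_\alpha\ge 0$ and $r_{\bar\alpha}<u_{\bar\alpha}$, so $\bar\nu_{\bar\alpha}^T M_{\bar\alpha\bar\alpha}\bar\nu_{\bar\alpha}<0$, contradicting positive semidefiniteness of $M_{\bar\alpha\bar\alpha}$; but the Farkas route is the one that matches the machinery set up just before the lemma.) The remaining bookkeeping --- matching rows and columns of $A$ to $\alpha$, $\bar\alpha$ and the two copies of the equality, and checking that a point of $S_2'$ with $\bar\alpha\neq\varnothing$ genuinely produces a feasible $w$ --- is routine.
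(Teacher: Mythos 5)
Your proposal is correct and follows essentially the same route as the paper: reduce to showing infeasibility of \eqref{system*} when $\bar\alpha\neq\varnothing$, and certify this via Lemma~\ref{lemfarkas} using a sign vector supplied by Lemma~\ref{lemcp} on $M_{\bar\alpha\bar\alpha}$ together with the nonpositivity of the off-diagonal blocks of the M-matrix $M$ (the paper exhibits the solution of the alternative system directly with $z_\alpha=0$ and $z_{\bar\alpha}=-\zeta\le 0$, which is your certificate up to sign). Your parenthetical shortcut via $\bar\nu_{\bar\alpha}^TM_{\bar\alpha\bar\alpha}\bar\nu_{\bar\alpha}<0$ is a valid, more elementary alternative, but the main argument matches the paper's.
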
	
\begin{proof}
Let $\alpha=\{1,2,\dots,m\}$. The alternative system for  \eqref{system*} is as
\begin{equation}
	Mz \ge 0, \quad (u-r)^Tz <0, \quad z \ge 0
\end{equation}
which clearly have no solutions since $u-r$ is positive. Therefore, it follows from Lemma~\ref{lemfarkas} that  \eqref{system*}  has at least one solution.
\par Now let $\alpha \neq \{1,2,\dots,m\}$ which means $\ab$ is a nonempty set. The alternative system for \eqref{system*} in this case is given by 
\begin{equation} \label{system**}
	\begin{bmatrix}
		M_{\an\an} & M_{\an\ab} \\ -M_{\ab\an} & -M_{\ab\ab}
	\end{bmatrix}
	\begin{bmatrix}
		z_\an \\ z_\ab
	\end{bmatrix}\ge 0, \quad \begin{bmatrix}
		u_\an-r_\an\\ u_\ab-r_\ab
	\end{bmatrix}^T\begin{bmatrix}
		z_\an \\ z_\ab 
	\end{bmatrix} <0 , \quad 
	z_\an \ge 0. 
\end{equation}
We argue that this system has at least one solution. To see this, take $z_\an=0$.  This leads to 
\begin{equation} \label{system**red1}
	\begin{bmatrix}
		M_{\an\ab} \\ -M_{\ab\ab}
	\end{bmatrix} z_\ab \ge 0, \quad (u_\ab-r_\ab)^Tz_\ab <0.
\end{equation}
Since $u > r$ and hence $u_\ab > r_\ab$, \eqref{system**red1} has a solution if and only if $z_\ab \le 0$ and $z_\ab \neq 0$. Consequently, \eqref{system**red1} has a solution if and only if the following has a solution:
\begin{equation} \label{system**red2}
	\begin{bmatrix}
		M_{\an\ab} \\ -M_{\ab\ab}
	\end{bmatrix} z_\ab \ge 0, \quad z_\ab \le 0, \quad z_\ab \neq 0.
\end{equation}
Since $M$ is  positive semidefinite due to Lemma~\ref{lemM}, it follows form  Lemma~\ref{lemcp} that  there exists a nonzero $z_\ab$ such that $-M_{\ab\ab}z_\ab \ge 0$ and $z_\ab \le 0$. As $M$ is an M-matrix, $M_{\an\ab}$ is nonpositive. Therefore, we have that   $M_{\an\ab}z_\ab \ge 0$ which concludes  that  \eqref{system**} has at least one solution and hence \eqref{system*} is infeasible for $\an\neq\{1,2,\dots,m\}$.
  \end{proof}
After these preparations, we are in a position to prove Theorem~\ref{slp2tocvx2}.
\begin{proof}[Proof of Theorem~\ref{slp2tocvx2}.]
Based on Lemma~\ref{lemrnu}, $S_2^\prime=C_2$ and hence optimization problems $\mathrm{SLP2^\prime}$ and CVX1  are the same. Therefore, $\opt{\mathrm{CVX2}}=\opt{\mathrm{SLP_2^\prime}}$.
Thus, it suffices to prove $\opt{\mathrm{SLP2^\prime}} \subseteq \opt{\mathrm{SLP2}}$.
\\
Let $(x^*,y^*) \in \opt{\mathrm{SLP_2^\prime}}$. Then, 
\begin{equation}
	\phi(x^*,y^*) \le \phi (x,y) \quad \forall  \ (x,y) \in S_2^\prime.
\end{equation}
Let $(\bar x, \bar y) \in S_2$. Then, it follows from Lemma~\ref{lemub} that  there exists $(\hat x, \bar y) \in S_2^\prime$ such that
\begin{equation}
	\phi(\hat x,\bar y) \le  \phi(\bar x, \bar  y).
\end{equation}
Consequently, we obtain
\begin{equation}
	\phi(x^*,y^*) \le \phi(\hat x,\bar y) \le  \phi(\bar x, \bar  y).  
\end{equation}
This means that $\phi(x^*,y^*) \le \phi(\bar x,\bar y)$ for all $(\bar x, \bar y) \in S_2$.
It is clear that  $S_2^\prime \subseteq S_2$. Therefore, $(x^*,y^*) \in S_2$ and hence  $(x^*,y^*) \in \opt{\mathrm{SLP2}}$. Thus, we can conclude $\opt{\mathrm{CVX2}}=\opt{\mathrm{SLP2^\prime}} \subseteq \opt{\mathrm{SLP2}}$. 
Furthermore, $C_2$ is a polyhedron and hence convex. Also, $\phi$ is convex on $C_2$ since $M$ is positive semidefinite due to Lemma~\ref{lemM}. Therefore, CVX2 is a convex optimization problem. 
  \end{proof}
\subsubsection{On $M$ being an M-matrix:}
One of the assumptions in Theorem~\ref{slp2tocvx2} is on the  structure of the matrix $M$. 
We know that $M$ is the Schur complement of matrix $X$ as mentioned in the proof of Lemma~\ref{lemM} and \eqref{schur}. Here, we discuss when  $M$ as in \eqref{eqM} is an M-matrix.
The following lemma shows that the Schur complement of an M-matrix is also an M-matrix.
\begin{lemma}[{\citealt[Theorem~5.13]{fiedler2008special}}]\label{lemMSchur}
	Suppose that  $N$ is an {\normalfont M}-matrix. Then, Schur complement of $N$ with respect to a positive definite submatrix of $N$ is also an {\normalfont M}-matrix.
\end{lemma}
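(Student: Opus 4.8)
The plan is to verify the two defining properties of an M-matrix directly for the Schur complement $S:=N/D$, namely that $S$ is a Z-matrix and that every eigenvalue of $S$ has nonnegative real part. After simultaneously permuting the rows and columns of $N$ --- an operation that preserves the Z-matrix property, preserves the spectrum, and only permutes the Schur complement accordingly --- we may assume that the positive definite principal submatrix $D$ occupies the trailing block, so that $N=\begin{bmatrix} A & B \\ C & D \end{bmatrix}$ and $S=A-BD^{-1}C$. Throughout I would use two standard facts: every principal submatrix of an M-matrix is again an M-matrix (e.g.\ via a representation $N=sI-P$ with $P\ge 0$ and $s\ge\rho(P)$, since the spectral radius of the nonnegative matrix $P$ dominates that of its principal submatrices), and a Z-matrix is a \emph{nonsingular} M-matrix if and only if it is invertible with entrywise nonnegative inverse.

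For the Z-matrix property: because $N$ is a Z-matrix, its off-diagonal blocks satisfy $B\le 0$ and $C\le 0$ entrywise and $A$ has nonpositive off-diagonal entries. The block $D$ is a principal submatrix of an M-matrix, hence an M-matrix, and being positive definite it is nonsingular, so it is a nonsingular M-matrix and $D^{-1}\ge 0$. Therefore $BD^{-1}\le 0$, whence $(BD^{-1})C\ge 0$ and $-BD^{-1}C\le 0$ entrywise; adding this to $A$ keeps all off-diagonal entries nonpositive, so $S$ is a Z-matrix.

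For the spectral property I would argue by perturbation, since this sidesteps the possibly singular case. Fix $\varepsilon>0$. Then $N+\varepsilon I$ is a Z-matrix all of whose eigenvalues have real part at least $\varepsilon>0$, hence it is a nonsingular M-matrix and $(N+\varepsilon I)^{-1}\ge 0$. As $D+\varepsilon I$ is invertible, the block-inversion formula identifies the leading block of $(N+\varepsilon I)^{-1}$ with $S_\varepsilon^{-1}$, where $S_\varepsilon:=(A+\varepsilon I)-B(D+\varepsilon I)^{-1}C$ is the Schur complement of $N+\varepsilon I$ with respect to $D+\varepsilon I$. Repeating the argument of the previous paragraph, $D+\varepsilon I$ is a nonsingular M-matrix, so $(D+\varepsilon I)^{-1}\ge 0$ and $S_\varepsilon$ is a Z-matrix; since in addition $S_\varepsilon^{-1}\ge 0$, $S_\varepsilon$ is a nonsingular M-matrix and hence all its eigenvalues have positive real part. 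Letting $\varepsilon\downarrow 0$ we have $S_\varepsilon\to S$ entrywise (using invertibility of $D$), so by continuity of the spectrum every eigenvalue of $S$ has nonnegative real part. Together with the previous paragraph, $S$ is an M-matrix.

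The step I expect to be the main obstacle is the spectral one: the sign pattern of a Schur complement is governed by an elementary entrywise computation, but the location of its eigenvalues is not, so one is forced to route the argument through the inverse and the ``Z-matrix with nonnegative inverse $\Leftrightarrow$ nonsingular M-matrix'' characterization, and then to recover the possibly singular original matrix by a limiting argument whose legitimacy hinges on the perturbed Schur complements being well defined and convergent --- which is exactly where nonsingularity of $D$ (here guaranteed by positive definiteness) enters. A perturbation-free alternative would be to invoke the characterization that a Z-matrix is an M-matrix if and only if all of its principal minors are nonnegative, combined with the quotient identity $\det\big((N/D)_{\beta\beta}\big)=\det N_{(\alpha\cup\beta)(\alpha\cup\beta)}\big/\det N_{\alpha\alpha}$, where $\alpha$ indexes $D$ and $\beta$ ranges over subsets of the complementary block: every numerator is nonnegative because $N$ is an M-matrix and $\det N_{\alpha\alpha}=\det D>0$, which gives the spectral condition for $S$ at once --- trading the limiting argument for reliance on the principal-minor characterization.
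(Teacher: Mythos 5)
Your argument is correct. Note, however, that the paper does not prove this lemma at all: it is imported verbatim as Theorem~5.13 of Fiedler's book, so there is no in-paper proof to compare against. Your write-up is a valid self-contained derivation. The Z-matrix step is exactly right ($D$ is a principal submatrix of an M-matrix, hence an M-matrix, and being nonsingular it has $D^{-1}\ge 0$, so $-BD^{-1}C\le 0$ entrywise), and the spectral step via the $\varepsilon$-perturbation is sound: $N+\varepsilon I$ is a nonsingular M-matrix, its inverse's leading block is $S_\varepsilon^{-1}\ge 0$, the ``Z-matrix with nonnegative inverse'' characterization gives that $S_\varepsilon$ is a nonsingular M-matrix, and continuity of eigenvalues passes the conclusion to $S$; the only prerequisites are the representation $N=sI-P$ with $P\ge 0$, $s\ge\rho(P)$ (which is equivalent to the paper's definition of M-matrix via Perron--Frobenius) and the invertibility of $D$, both of which you use correctly. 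Your closing alternative --- combining the quotient identity $\det\bigl((N/D)_{\beta\beta}\bigr)=\det N_{(\alpha\cup\beta)(\alpha\cup\beta)}/\det N_{\alpha\alpha}$ with the characterization of (possibly singular) M-matrices among Z-matrices by nonnegativity of all principal minors --- is essentially the textbook proof and is equally valid; it trades the limiting argument for two classical identities.
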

The following theorem  provides sufficient conditions for $M$ to be an M-matrix.
\begin{theorem} \label{theoMmatirx}
	Suppose that $F$ is  a nonpositive matrix and rows of $F$ are orthogonal, i.e., $F_{ i \bullet } (F_{j\bullet })^T=0$ for all $i \neq j$. Then, $M$ is an M-matrix  if $R$ is diagonal. 
\end{theorem}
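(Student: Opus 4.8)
The plan is to derive that $M$ is an M-matrix from Lemma~\ref{lemMSchur}, using the observation already made in the proof of Lemma~\ref{lemM} that $M$ in \eqref{eqM} is precisely the Schur complement of the matrix $X$ in \eqref{schur} with respect to its positive definite submatrix $FR^{-1}F^T$. Since Lemma~\ref{lemMSchur} states that the Schur complement of an M-matrix with respect to a positive definite submatrix is again an M-matrix, it is enough to show that $X$ itself is an M-matrix; as $X$ is already known to be positive semidefinite, this reduces to checking that $X$ is a Z-matrix, i.e.\ that every off-diagonal entry of $X$ is nonpositive.

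The crux is the following structural fact: under the hypotheses the rows of $F$ have pairwise disjoint supports. Indeed, for $i\neq j$ the quantity $F_{i\bullet}(F_{j\bullet})^T=\sum_k F_{ik}F_{jk}$ is a sum of products of two nonpositive numbers, hence a sum of nonnegative terms, and it vanishes only if $F_{ik}F_{jk}=0$ for every $k$; that is, $F_{i\bullet}$ and $F_{j\bullet}$ are supported on disjoint index sets. Because $R$, and hence $R^{-1}$, is diagonal with strictly positive diagonal, this forces $FR^{-1}F^T$ to be a \emph{diagonal} matrix, with strictly positive diagonal entries (each row of $F$ is nonzero, $F$ having full row rank); in particular this reconfirms positive definiteness of that submatrix.

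With this the sign pattern of $X=\begin{bmatrix} R^{-1} & R^{-1}F^T\\ FR^{-1} & FR^{-1}F^T \end{bmatrix}$ reads off at once: both diagonal blocks $R^{-1}$ and $FR^{-1}F^T$ are diagonal, so their off-diagonal entries vanish, while the off-diagonal blocks $R^{-1}F^T$ and $FR^{-1}$ have generic entry $(R^{-1})_{aa}F_{ia}\le 0$, since $(R^{-1})_{aa}>0$ and $F$ is nonpositive. Hence all off-diagonal entries of $X$ are nonpositive, $X$ is a Z-matrix, and being positive semidefinite it is an M-matrix; Lemma~\ref{lemMSchur} then yields that $M$ is an M-matrix, as claimed.

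The computation is short, so the only real point is to keep track of which hypothesis does what: diagonality of $R$ is what keeps the blocks $R^{-1}F^T$ and $FR^{-1}$ nonpositive and makes $FR^{-1}F^T$ diagonal, whereas nonpositivity together with orthogonality of the rows of $F$ is exactly what is needed for $FR^{-1}F^T$ to be a Z-matrix — without orthogonality $(FR^{-1}F^T)_{ij}$ would typically be strictly positive for $i\neq j$ and $X$ would fail to be a Z-matrix. If one prefers to bypass $X$, the same conclusion follows by computing $M$ entrywise from \eqref{eqM}: for $a\neq b$ one has $M_{ab}=-(R^{-1})_{aa}\bigl(F^T(FR^{-1}F^T)^{-1}F\bigr)_{ab}(R^{-1})_{bb}$, and $\bigl(F^T(FR^{-1}F^T)^{-1}F\bigr)_{ab}=\sum_i (FR^{-1}F^T)^{-1}_{ii}F_{ia}F_{ib}\ge 0$ because the diagonal of $(FR^{-1}F^T)^{-1}$ is positive and $F_{ia}F_{ib}\ge 0$; thus $M_{ab}\le 0$, and combining this with positive semidefiniteness of $M$ (Lemma~\ref{lemM}) and the remark that a positive semidefinite Z-matrix is an M-matrix finishes the argument.
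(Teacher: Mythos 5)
Your proof is correct and follows essentially the same route as the paper: show that $X$ in \eqref{schur} is a positive semidefinite Z-matrix, hence an M-matrix, and then invoke Lemma~\ref{lemMSchur}. You are in fact slightly more careful than the paper on one point — the paper asserts that $FR^{-1}F^T$ is diagonal directly from row-orthogonality, whereas your disjoint-supports observation (which genuinely uses the nonpositivity of $F$) is what actually justifies this when $R^{-1}$ is a general positive diagonal matrix rather than the identity.
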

\begin{proof}
The matrix $M$ is the Schur complement of matrix $X$ given by \eqref{schur}. Since  $F$ is a matrix with  orthogonal rows and $R$ is positive definite and diagonal, $FR^{-1}F^T$ and hence $(FR^{-1}F^T)^{-1}$  are also positive definite diagonal matrices. Moreover,   $FR^{-1}$ is nonpositive which makes $X$ an M-matrix. Consequently,  $M$ is also an M-matrix based on Lemma~\ref{lemMSchur}.
  \end{proof}
\subsection{General Case}
Here, we consider the main problem, i.e. the optimization problem \eqref{blp}. We rewrite BLP by characterizing lower level problem based on KKT conditions as
\begin{subequations}\label{slp}
	\begin{alignat}{2}\mathrm{SLP:}\qquad
		&\min_{x,y} \quad &&  \phi(x,y) \\
		& \mathrm{subject \ to} \quad &&(x,y) \in S
	\end{alignat}
\end{subequations}
where 
\begin{multline}
	S=\{(x,y) \mid x \ge 0, \ y=M(x+\mu-\nu)+r, \ \ell \le y \le u, \ \mu^T(y-\ell)=0, \ \nu^T(u-y)=0\\ \ \mathrm{for \ some} \ \mu \ge 0 \ \mathrm{and} \ \nu\ge 0 \}.
\end{multline}
Note that $\mu$ and  $\nu$ are dual variables for $\ell \le y$ and $y \le u$ in \eqref{llpinq}, respectively. Moreover, the dual variable  for the constraint \eqref{llpeq} has been eliminated  from KKT conditions in  a similar way to \eqref{KKTder}-\eqref{KKTs2}. Also, $M$ and $r$ are as in \eqref{eqM} and \eqref{eqr}, respectively with $R=Q$ and $F=E$. As a result, $M$ is an M-matrix based on Theorem~\ref{theoMmatirx}.
\par
The  theorem below indicates that  there exists a convex optimization problem which can capture a subset of the set of global optima for SLP. 
\begin{theorem}\label{slptocvx}
	Consider the following optimization problem:
	\begin{subequations}\label{cvx}
		\begin{alignat}{2}\mathrm{CVX:}\qquad
			&\min_{x,y} \quad &&  \phi(x,y)  \label{cvxof}\\
			& \mathrm{subject \ to} \quad && (x,y) \in C \label{cvxc}
		\end{alignat}
	\end{subequations}
	where
	\begin{equation}
		C=\{(x,y) \mid x \ge 0, \ y=Mx+r, \ \ell \le y \le u \}. 
	\end{equation} 
	Suppose that $\ell \le 0$, $u \ge 0$, and $u > r $. 
	Then, $\opt{\mathrm{CVX}} \subseteq \opt{\mathrm{SLP}}$. Furthermore, the optimization problem CVX is convex.
\end{theorem}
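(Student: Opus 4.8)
The plan is to reuse the two special cases and chain their reductions. The crux is an auxiliary claim of the same flavour as Lemmas~\ref{lemlb} and \ref{lemub}: for every $(\bar x,\bar y)\in S$ there exists $(\hat x,\bar y)\in C$ with $\phi(\hat x,\bar y)\le\phi(\bar x,\bar y)$. To establish it I would start from $(\bar x,\bar y)\in S$, so there are $\bar\mu,\bar\nu\ge 0$ with $\bar x\ge 0$, $\bar y=M(\bar x+\bar\mu-\bar\nu)+r$, $\ell\le\bar y\le u$, $\bar\mu^T(\bar y-\ell)=0$ and $\bar\nu^T(u-\bar y)=0$. First absorb $\bar\mu$ into the price: set $\tilde x=\bar x+\bar\mu\ge 0$, so that $\bar y=M(\tilde x-\bar\nu)+r\le u$ and $\bar\nu^T(u-\bar y)=0$, i.e. $(\tilde x,\bar y)\in S_2$ (note $M$ and $r$ in \eqref{eqM}-\eqref{eqr} are built from $R=Q$ and $F=E$ only, not from the bounds). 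Repeating verbatim the componentwise estimate in the proof of Lemma~\ref{lemlb} — using $\ell\le 0$ and the fact that $\bar\mu_i>0$ forces $\bar y_i=\ell_i\le 0$ — gives $\phi(\tilde x,\bar y)\le\phi(\bar x,\bar y)$.

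Next I would dispose of $\bar\nu$ with the upper-bound machinery. Since $u\ge 0$, applying Lemma~\ref{lemub} to $(\tilde x,\bar y)\in S_2$ produces $(\hat x,\bar y)\in S_2'$ with $\phi(\hat x,\bar y)\le\phi(\tilde x,\bar y)$. In the general case $E=-I_n\otimes\mathds{1}_K^T$ is nonpositive with pairwise orthogonal rows and $Q$ is positive definite and diagonal, so Theorem~\ref{theoMmatirx} guarantees that $M$ is an M-matrix; combined with $u>r$, Lemma~\ref{lemrnu} yields $S_2'=C_2$, hence $(\hat x,\bar y)\in C_2$, i.e. $\hat x\ge 0$, $\bar y=M\hat x+r$ and $\bar y\le u$. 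The key observation is that the second component $\bar y$ is never altered in either step, so the inequality $\bar y\ge\ell$ inherited from $(\bar x,\bar y)\in S$ still holds; therefore $(\hat x,\bar y)\in C$, and chaining the two estimates gives $\phi(\hat x,\bar y)\le\phi(\tilde x,\bar y)\le\phi(\bar x,\bar y)$, which proves the auxiliary claim.

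With the claim available, the inclusion follows exactly as in the proofs of Theorems~\ref{slp1tocvx1} and \ref{slp2tocvx2}: take $(x^*,y^*)\in\opt{\mathrm{CVX}}$ and any $(\bar x,\bar y)\in S$, pick $(\hat x,\bar y)\in C$ with $\phi(\hat x,\bar y)\le\phi(\bar x,\bar y)$, and conclude $\phi(x^*,y^*)\le\phi(\hat x,\bar y)\le\phi(\bar x,\bar y)$. Since $C\subseteq S$ (choose $\mu=\nu=0$), the point $(x^*,y^*)$ lies in $S$, hence in $\opt{\mathrm{SLP}}$, giving $\opt{\mathrm{CVX}}\subseteq\opt{\mathrm{SLP}}$. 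For convexity, eliminating $y$ via $y=Mx+r$ turns the objective into $(x-p)^T(Mx+r)$, a quadratic in $x$ whose Hessian $M+M^T=2M$ is positive semidefinite by Lemma~\ref{lemM}, minimized over the polyhedron $C$; thus CVX is convex.

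I expect the main obstacle to be the bookkeeping inside the auxiliary claim: one must check that the intermediate point $(\tilde x,\bar y)$ genuinely lands in $S_2$ with the \emph{same} $M$ and $r$, and that the lower-bound constraint $y\ge\ell$ is carried along untouched through the reduction $S_2\to S_2'\to C_2$, so that the final point belongs to $C$ rather than merely to $C_2$. A secondary routine check is that the hypotheses needed by the invoked results — $\ell\le 0$ for the $\mu$-absorption, $u\ge 0$ for Lemma~\ref{lemub}, and ``$M$ an M-matrix together with $u>r$'' for Lemma~\ref{lemrnu} — are precisely the standing assumptions $\ell\le 0$, $u\ge 0$, $u>r$ of the theorem plus the structural fact, already recorded above, that $M$ is an M-matrix in the general case.
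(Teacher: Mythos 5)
Your proposal is correct and follows essentially the same route as the paper: absorb $\bar\mu$ into the price via the lower-bound estimate (the paper formalizes this by invoking Lemma~\ref{lemlb} with the shifted offset $r'=r-M\bar\nu$, which produces exactly your $\tilde x=\bar x+\bar\mu$), then pass through $S_2\to S_2'\to C_2$ via Lemmas~\ref{lemub} and \ref{lemrnu}, keeping $\bar y$ fixed so that $\bar y\ge\ell$ survives and the final point lands in $C$. The chaining of inequalities, the inclusion $C\subseteq S$, and the convexity argument all match the paper's proof.
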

\begin{proof}
Let $(\bar x, \bar y) \in S$. Therefore, there exist $\bar \mu \ge 0$ and $\bar \nu \ge 0$ such that 
\begin{equation} \label{slpcbar}
	\bar x \ge 0, \quad  \bar y=M(\bar x+\bar \mu-\bar \nu)+r, \quad \ell \le \bar y \le u, \quad \bar \mu^T(\bar y-\ell)=0, \quad \bar \nu^T(u-\bar y)=0.
\end{equation} 
Let $\bar y=M(\bar x+\bar \mu) +r^\prime$ where $r^\prime =r-M\bar v$.
As a result, $(\bar x, \bar y) \in S_1^\prime$.
From Lemma~\ref{lemlb}, there exists $(\hat x_1, \bar y) \in C_1^\prime$ such that 
\begin{equation}\label{eq:in1}
	\phi (\hat x_1,\bar y) \le \phi (\bar x,\bar y).
\end{equation}
Moreover, as $\bar y$ satisfies $\bar y \le u, \ \bar \nu^T(u-\bar y)=0$ for  $\bar \nu \ge 0$ and $r^\prime=r-M\bar \nu$, we can conclude
\begin{equation}
	(\hat x_1, \bar y) \in S_2 \cap \{(x,y) \mid y  \ge \ell \}.
\end{equation}
Thus, $(\hat x_1, \bar y) \in S_2$.
Due to Lemma~\ref{lemub} and Lemma~\ref{lemrnu}, there exists $(\hat x_2,\bar y) \in C_2$ such that 
\begin{equation}\label{eq:in2}
	\phi (\hat x_2,\bar y) \le \phi (\hat x_1,\bar y).
\end{equation}
Furthermore, since $\bar y \ge \ell$, $(\hat x_2,\bar y) \in C$. 
Then,  \eqref{eq:in1} and \eqref{eq:in2} imply that 
\begin{equation} \label{eq:in12}
	\phi (\hat x_2,\bar y) \le \phi (\bar x,\bar y) \quad \forall \ (\bar x, \bar y) \in S.
\end{equation}
Now, let $(x^*,y^*) \in \opt{\mathrm{CVX}}$. That is,
\begin{equation}
	\phi(x^*,y^*) \le \phi(x,y) \quad \forall \ (x,y) \in C.
\end{equation}
Since $(\hat x_2, \bar y) \in C$, \eqref{eq:in12} implies that
\begin{equation}
	\phi(x^*,y^*) \le \phi(\bar x, \bar  y) \quad \forall \ (\bar x ,\bar y) \in S.
\end{equation}
Since $C \subseteq S$, $(x^*,y^*) \in S$.  Consequently, $(x^*,y^*) \in \opt{\mathrm{SLP}}$ and hence $\opt{\mathrm{CVX}}\subseteq \opt{\mathrm{SLP}}$. 
Furthermore, $C$ is a polyhedral  convex set. Also, $\phi$ is convex on $C$ since $M$ is positive semidefinite due to Lemma~\ref{lemM}. Therefore, CVX is a convex optimization problem.
  \end{proof}
\subsection{Interpretation of Theorem~\ref{slptocvx}  for  \ac{rtm}} 
Theorem~\ref{slptocvx} has certain hypotheses on the parameters $\ell$, $u$, and $r$. Here, we discuss the implications of these hypotheses for the proposed \ac{rtm} platform.
It follows from Theorem~\ref{slptocvx} that the  vectors  $\ell=s-\bar{h}$ and $u=s-\ubar{h}$ should be nonpositive and nonnegative, respectively. The vector $s$ is the  generated energy by \ac{res} of the prosumers, 
the vector $\ubar{h}$ is the lower bound of the prosumers' demand  and 
the vector $\bar{h}$ is the upper bound for their  demand. 
To have  $\ell \le 0$ and $u \ge 0$, the aggregator should ask the prosumers to set the upper bound of their demands $\bar{h}$ greater than or equal to their \ac{res}' capacity and also the lower bound for their demands $\ubar{h}$  equal to zero.
Moreover, Theorem~\ref{slptocvx} states that $u$ should be  strictly greater than $ r $ where 
$r$ is defined as in \eqref{eqr} with $R=Q$ and $F=E$. Considering Assumption~\ref{asump1}, we can show that 
\begin{equation}
	u > r \iff h^0 >\ubar{h}.
\end{equation}  
Therefore, to have $u > r $, the aggregator should ask the prosumers to set their preferred values $h^0$ greater than  the lower bound for their demand $\ubar{h}$, i.e., $h^0 >0$. 
\section{Conclusions}\label{sec:con}
The problem of participation of the prosumers in the wholesale market through the aggregator has been widely studied in the literature. To represent the intrinsic hierarchy of this problem, we developed a market platform based on a bilevel optimization problem. Bilevel optimization are generally highly nonconvex and current approaches to deal with these problems are computationally expensive. To implement this market platform in real-time, we proposed a specific convex optimization problem and showed that  each global minimizer of this convex problem are also a global minimizer for the original bilevel problem under some assumptions on the parameters.
\par 
While the proposed convex approach can reduce the computational time significantly in contrast to the state-of-the-art methods (e.g., \ac{mip}), the assumption  that the aggregator has a centralized control over the prosumers may limit the applicability of the proposed method to large scale networks. An interesting important  area of future research  could be design of a decentralized or distributed control mechanism using the  convex problem  to tackle this issue.

\bibliographystyle{unsrtnat}
\bibliography{mybib}  

\end{document}